\newtheorem{theorem}{Theorem}
\newtheorem{lemma}[theorem]{Lemma}
\newtheorem{prop}[theorem]{Proposition}
\newtheorem{cor}[theorem]{Corollary}
\newcounter{theremark}
\newenvironment{remark}{\medskip\parindent=0pt\textbf{Remark \arabic{theremark}\addtocounter{theremark}{1}.\ }\ignorespaces}{\medskip\par\normalsize\parindent=18pt}
\numberwithin{equation}{section}
\newcommand*\samethanks[1][\value{footnote}]{\footnotemark[#1]}
\begin{document}

\title{The distribution of directions in an affine lattice:\\ two-point correlations and mixed moments\thanks{The research leading to these results has received funding from the European Research Council under the European Union's Seventh Framework Programme (FP/2007-2013) / ERC Grant Agreement n. 291147. J.M. is also supported by a Royal Society Wolfson Research Merit Award.}}
\author{Daniel El-Baz\thanks{School of Mathematics, University of Bristol, Bristol BS8 1TW, U.K.} \and Jens Marklof\samethanks \and Ilya Vinogradov\samethanks}
\date{\today}

\maketitle

\begin{abstract}
We consider an affine Euclidean lattice and record the directions of all lattice vectors of length at most $T$. Str\"ombergsson and the second author proved in [Annals of Math.~173 (2010), 1949--2033] that the distribution of gaps between the lattice directions has a limit as $T$ tends to infinity. For a typical affine lattice, the limiting gap distribution is universal and has a heavy tail; it differs markedly from the gap distribution observed in a Poisson process, which is exponential. The present study shows that the limiting two-point correlation function of the projected lattice points exists and is Poissonian. This answers a recent question by Boca, Popa and Zaharescu [arXiv:1302.5067]. The existence of the limit is subject to a certain Diophantine condition. We also establish the convergence of more general mixed moments.
\end{abstract}

\section{Introduction\label{sec:intro}}

It is an interesting problem to understand the ``randomness'' in a given deterministic sequence of real numbers. Take for instance the values of a fixed binary positive quadratic form at integer lattice points. If the form is generic, i.e.~badly approximable by rational forms, numerical experiments suggest that the fine-scale statistics are the same as those of a Poisson point process. The only result to-date in this direction is the proof of the convergence of the two-point correlation function  \cite{sarnak_values_1997, Eskin05quadraticforms}, cf.~also \cite{marklof_pair_correlation_2003,marklof_pair_correlation_II_2002,margulis_quantitative_2011} for the case of inhomogeneous quadratic forms. The convergence of higher-order correlation functions has only been established in the case of generic (in measure) positive definite quadratic forms in many variables \cite{vanderkam_values_1999,vanderkam_pair_correlation_1999,VanderKam00}. The situation is similar in the problem of fine-scale 
statistics for the fractional parts of the sequence $n^2\alpha$, $n=1,\ldots,N\to\infty$, where we expect the local 
statistics to converge to those of a Poisson point process (after rescaling the sequence by a factor $N$), provided $\alpha$ is badly approximable by rationals. As in the case of binary quadratic forms, we so far only have results for the two-point correlation function \cite{rudnick_pair_1998,marklof_equidistribution_2003,heath-brown_pair_2010}. (See however \cite{rudnick_sarnak_zaharescu_2001} for the convergence of the gap distribution along special subsequences of $N$ for well approximable $\alpha$.)

In the present paper we construct a deterministic sequence whose two-point correlation function converges to the Poisson limit, although the limiting process is not Poisson. This sequence is given by the directions of vectors in an affine Euclidean lattice of length less than $T$, as $T\to\infty$.

Let $\scrL\subset\R^2$ be a Euclidean lattice of covolume one. We may write $\scrL=\Z^2 M_0$ for a suitable $M_0\in\SL(2,\R)$. For $\vecxi=(\xi_1,\xi_2)\in\R^2$, we define the associated affine lattice as $\scrL_\vecxi=(\Z^2+\vecxi)M_0$. Denote by $\scrP_T$ the set of points $\vecy\in\scrL_\vecxi\setminus\{\vecnull\}$ inside the open disc of radius $T$ centered at zero or, more generally, in the annulus $cT< \|\vecy\| <T$ for some fixed $c\in[0,1)$. The number $N(T)$ of points in $\scrP_T$ is asymptotically
\begin{equation}
N(T)  \sim \pi (1-c^2) T^2 , \qquad T\to\infty.
\end{equation}
We are interested in the distribution of directions $\|\vecy\|^{-1} \vecy$ as $\vecy$ ranges over $\scrP_T$, counted {\em with} multiplicity. That is, if there are $k$ lattice points corresponding to the same direction, we will record that direction $k$ times. For each $T$, this produces a finite sequence of $N(T)$ unit vectors $(\cos(2\pi\alpha_j),\sin(2\pi\alpha_j))$ with $\alpha_j=\alpha_j(T)\in\T=\R/\Z$ and $j=1,\ldots,N(T)$. It is well known that the set of directions is uniformly distributed as $T\to\infty$: for any interval $U\subset\T$ we have
\begin{equation}\label{udi}
\lim_{T\to\infty} \frac{\# \{ j \le N(T) \colon  \alpha_j \in U \}}
{N(T)} 
= |U| ,
\end{equation}
where $|\cdot|$ denotes length. 
Given a bounded interval $I \subset \R$, define the subinterval $J=J_N(I,\alpha)=N^{-1} I +\alpha+\Z\subset\T$ of length $N^{-1}|I|$, and ask for the number of directions $\alpha_j$ that fall into this small interval:
\begin{equation}
\scrN_{c,T}(I,\alpha) = \#\{ j\le N(T) \colon \alpha_j\in J_{N(T)}(I,\alpha) \}.
\end{equation}
With this choice, \eqref{udi} implies that for any Borel probability measure $\lambda$ on $\T$ with continuous density,
\begin{equation}\label{expectation}
\lim_{T\to\infty} \int_\T \scrN_{c,T}(I,\alpha)\, \lambda(d\alpha) = | I |.
\end{equation}
It is proved in \cite{marklof_strombergsson_free_path_length_2010} that for every $\vecxi\in\R^2$ and $\alpha\in\T$ random with respect to $\lambda$ (which is only assumed to be absolutely continuous with respect to Lebesgue measure), the random variable $\scrN_{c,T}(I,\alpha)$ has a limit distribution $E_{c,\vecxi}(k,I)$. That is, for every $k\in\Z_{\ge 0}$,
\begin{equation}\label{one}
\lim_{T\to\infty}\lambda(\{ \alpha\in\T \colon \scrN_{c,T}(I,\alpha) = k\}) = E_{c,\vecxi}(k,I).
\end{equation}
The limit distribution $E_{c,\vecxi}(k,I)$ is independent of the choice of $\lambda$, $\scrL$ and, if $\vecxi\notin\Q^2$, independent of $\vecxi$. In fact, these results hold for several test intervals $I_1,\ldots, I_m$, and follow directly from Theorem 6.3, Remark 6.4  and Lemma 9.5 of \cite{marklof_strombergsson_free_path_length_2010} 
for $\vecxi\notin\Q^2$ and from Theorem 6.5, Remark 6.6 and Lemma 9.5 of \cite{marklof_strombergsson_free_path_length_2010}  in the case $\vecxi\in\Q^2$:

\begin{theorem}\label{th:prelim}
Fix $\vecxi\in\R^2$ and let $I=I_1\times\cdots \times I_m\subset \R^m$ be a bounded box. Then there is a probability distribution $E_{c,\vecxi}(\,\cdot\,,I)$ on $\Z_{\ge 0}^m$ such that, for any $\veck=(k_1,\ldots,k_m)\in\Z_{\ge 0}^m$ and any Borel probability measure $\lambda$ on $\T$, absolutely continuous with respect to Lebesgue,
\begin{equation}
\lim_{T\to\infty}\lambda(\{ \alpha\in\T \colon \scrN_{c,T}(I_1,\alpha) = k_1,\ldots,\scrN_{c,T}(I_m,\alpha) = k_m\}) = E_{c,\vecxi}(\veck,I).
\end{equation}
\end{theorem}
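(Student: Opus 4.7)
The plan is to rewrite $\scrN_{c,T}(I,\alpha)$ as the count of points of a fixed bounded test region intersected with an affine unimodular lattice that itself moves along a curve in its moduli space, and then to invoke equidistribution of this curve as $T\to\infty$. For $\alpha\in\T$ let $R(\alpha)\in\SO(2)$ denote rotation by $2\pi\alpha$ (acting on row vectors from the right) and set $\Phi^T=\diag(T^{-1},T)$. A point $\vecy\in\scrL_\vecxi$ with $cT<\|\vecy\|<T$ has direction $\alpha_j\in J_{N(T)}(I,\alpha)+\Z$ if and only if $\vecy\, R(-\alpha)\,\Phi^T$ lies in a bounded region $\fA(I)\subset\R^2$ (essentially $(c,1)\times\tfrac{2}{1-c^2}I$, with the linearisation of $\sin$ contributing a boundary layer of measure $o(1)$). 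Thus, up to negligible boundary contributions,
\begin{equation}
\scrN_{c,T}(I_j,\alpha) = \#\bigl((\Z^2+\vecxi)\,M_0\,R(-\alpha)\,\Phi^T \cap \fA(I_j)\bigr),\qquad j=1,\ldots,m.
\end{equation}

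The curve $\alpha\mapsto R(-\alpha)\Phi^T$ is a horocyclic circle in $\SL(2,\R)$ translated by the geodesic of ``time'' $\log T$, and the induced curve of affine lattices $L_T(\alpha):=(\Z^2+\vecxi)\,M_0\,R(-\alpha)\,\Phi^T$ equidistributes in the space of affine unimodular lattices as $T\to\infty$. This is the content of Theorems 5.2--5.3 of Marklof--Str\"ombergsson \cite{marklof_strombergsson_free_path_length_2010}: the limit is the Haar measure $\mu_\vecxi$ on a homogeneous subspace $X_\vecxi$, namely the full space of affine lattices for $\vecxi\notin\Q^2$ (Theorem 6.3 and Remark 6.4 of loc.~cit.) and a congruence cover of $\SL(2,\Z)\quot\SL(2,\R)$ for $\vecxi\in\Q^2$ (Theorem 6.5 and Remark 6.6). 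Together with the reduction above, this yields, for every bounded continuous $f\colon\Z_{\ge 0}^m\to\R$,
\begin{equation}
\int_\T f\bigl(\scrN_{c,T}(I_1,\alpha),\ldots,\scrN_{c,T}(I_m,\alpha)\bigr)\,\lambda(d\alpha)\longrightarrow \int_{X_\vecxi} f\bigl(\#(L\cap\fA(I_1)),\ldots,\#(L\cap\fA(I_m))\bigr)\,d\mu_\vecxi(L),
\end{equation}
and setting $E_{c,\vecxi}(\veck,I):=\mu_\vecxi(\{L\in X_\vecxi : \#(L\cap\fA(I_j))=k_j,\ j=1,\ldots,m\})$ completes the argument.

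The principal technical obstacle is that the counting maps $L\mapsto\#(L\cap\fA(I_j))$ are unbounded and discontinuous on the non-compact moduli space $X_\vecxi$, so weak-$*$ equidistribution does not automatically transfer to them. One must therefore establish the uniform tail bound
\begin{equation}
\lim_{R\to\infty}\sup_{T\ge T_0}\lambda\bigl(\{\alpha\in\T : \scrN_{c,T}(I,\alpha)>R\}\bigr)=0,
\end{equation}
which is precisely the role of Lemma 9.5 of \cite{marklof_strombergsson_free_path_length_2010}; together with an approximation of $\mathbf 1_{\{\veck\}}$ by continuous compactly supported test functions on $\Z_{\ge 0}^m$, this promotes equidistribution to convergence of the probabilities stated in the theorem. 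The absolute continuity of $\lambda$ is essential for the equidistribution step itself: without it one could concentrate $\lambda$ on exceptional $\alpha$ for which $L_T(\alpha)$ stays in a cusp of $X_\vecxi$, causing the count to blow up.
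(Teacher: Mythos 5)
Your proposal is correct and takes essentially the same route as the paper, which itself does not give a self-contained proof of Theorem~\ref{th:prelim} but derives it directly from Theorems~6.3/6.5, Remarks~6.4/6.6, and Lemma~9.5 of \cite{marklof_strombergsson_free_path_length_2010}. You faithfully reconstruct the logic behind those citations: recasting $\scrN_{c,T}(I_j,\alpha)$ as a lattice count in a fixed trapezoid $\fC_c(I_j)$ after applying $R(-\alpha)\Phi^T$ (with your $\fA(I)$ being a slightly imprecise description of the trapezoid \eqref{CcI}, but the boundary caveat covers this), invoking equidistribution of the circle translated by the geodesic flow in the appropriate homogeneous space (full affine space for $\vecxi\notin\Q^2$, a congruence cover for $\vecxi\in\Q^2$), and using the uniform tail bound of Lemma~9.5 to upgrade weak-$*$ equidistribution to convergence against the unbounded, discontinuous counting functionals.
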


In the language of point processes, Theorem \ref{th:prelim} says that the point process \[\{ N(T) (\alpha_j-\alpha+\Z)\}_{j\le N(T)}\] on the torus $\R/(N(T)\Z)$ converges, as $T\to\infty$, to a random point process on $\R$ which is determined by the probabilities $E_{c,\vecxi}(\veck,I)$.
We will give a precise characterization of $E_{c,\vecxi}(\veck,I)$ in Section \ref{sec:props}, and now only highlight the following key properties:
\begin{enumerate}[(a)]
\item $E_{c,\vecxi}(\veck,I)$ is independent of $\lambda$ and $\scrL$.
\item $E_{c,\vecxi}(\veck,I+r\underline{e})=E_{c,\vecxi}(\veck,I)$ for any $r\in\R$, where $\underline{e}=(1,1,\ldots,1)$; that is, the limiting process is translation invariant.
\item $\sum_{\veck\in\Z_{\ge 0}^m} k_j E_{c,\vecxi}(\veck,I) = \sum_{k=0}^\infty k E_{c,\vecxi}(k,I_j) = |I_j|$ for any $j\le m$.
\item For $\vecxi\in\Q^2$, $\sum_{\veck\in\Z_{\ge 0}^m} \|\veck\|^\sigma E_{c,\vecxi}(\veck,I) < \infty$ for $0\le \sigma<2$, and $=\infty$ for $\sigma\ge 2$. 
\item For $\vecxi\notin\Q^2$, $E_{c,\vecxi}(\veck,I)=:E_{c}(\veck,I)$ is independent of $\vecxi$.
\item $\sum_{\veck\in\Z_{\ge 0}^m} \|\veck\|^\sigma E_{c}(\veck,I) < \infty$ for $0\le \sigma<3$, and $=\infty$ for $\sigma\ge 3$. 
\end{enumerate}

Theorem \ref{th:prelim} implies for example that the distribution of spacings between each element and its $k$th neighbor to the right has a limit distribution, cf.~Figure \ref{fig:gaps}.

\begin{figure}[t]
\centerline{\includegraphics[width=0.9\textwidth]{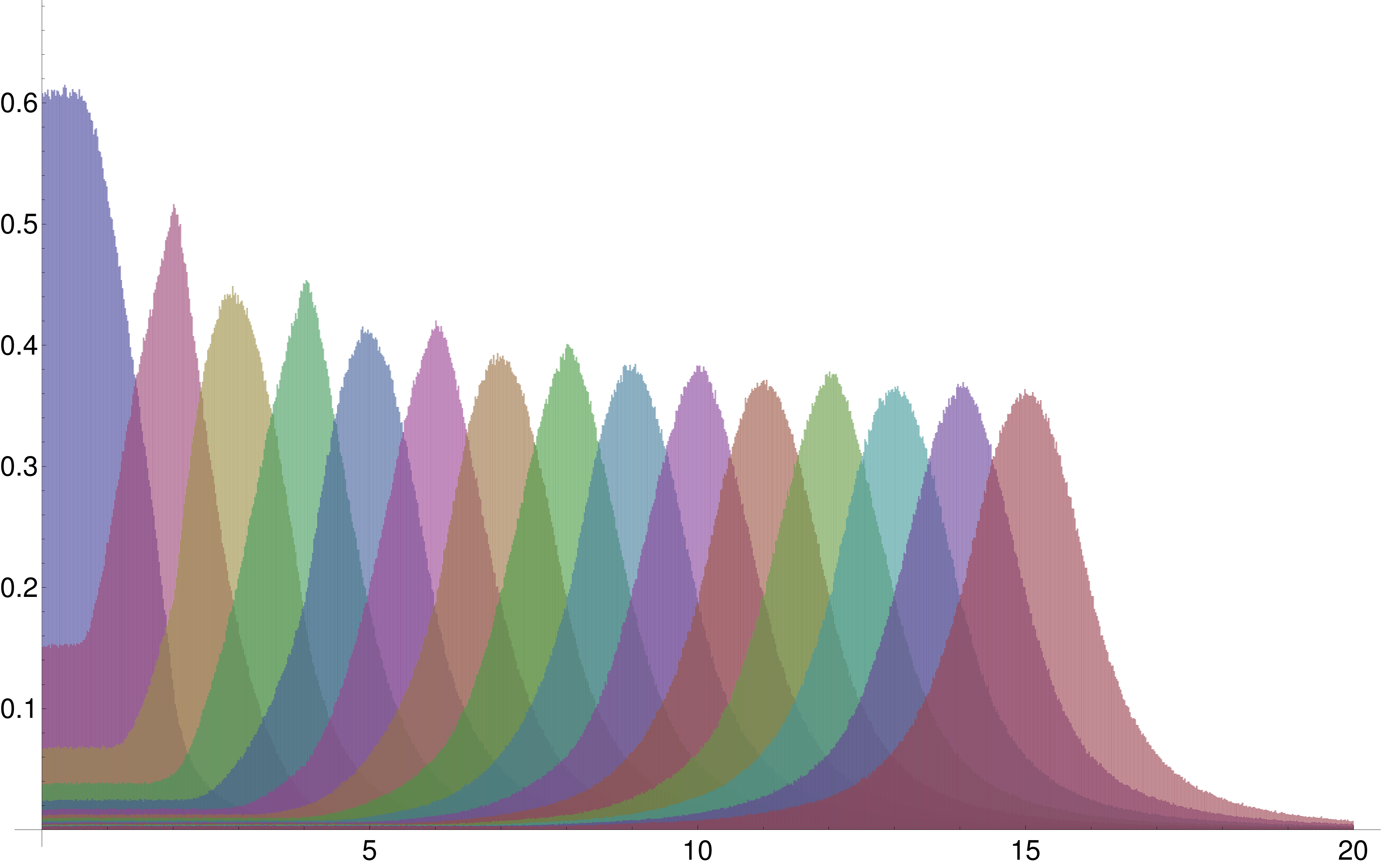}}
\caption{The figure shows the distribution of spacings between each element $\alpha_j$ and its $k$th neighbor to the right, for $k=1,\ldots,15$ and $\vecxi=(\sqrt[3]4,\sqrt[3]2)$, $T=1000$. The case $k=1$ corresponds to the gap distribution.  }
\label{fig:gaps}
\end{figure}

Properties (d) and (f) imply that the limiting process is {\em not} a Poisson process. We will however see that the second moments and two-point correlation functions are those of a Poisson process with intensity $1$, when $\vecxi\notin\Q^2$. Specifically, we have
\begin{equation}\label{Ec2}
\sum_{\veck\in\Z_{\ge 0}^2} k_1 k_2 E_{c}(\veck,I_1\times I_2) = |I_1\cap I_2|+|I_1|\,|I_2|
\end{equation}
and, in particular, 
\begin{equation}\label{Ec1}
\sum_{k=0}^\infty k^2 E_{c}(k,I_1) = |I_1|+|I_1|^2 ,
\end{equation}
which coincide with the corresponding formulas for the Poisson distribution. 

The main result of the study presented here is to establish the convergence to the finite moments of the limiting process. It is interesting that the convergence of certain moments requires a Diophantine condition on $\vecxi$. We say that  {\em $\vecxi \in \R^2$ is Diophantine of type $\kappa$} if there exists $C>0$  such that 
\beq\forall \bm r = (r_1, r_2) \in \Z^2 \setminus \{\vecnull\}, \forall m \in \Z, \, |\bm r\cdot \vecxi + m| \ge \frac C{(|r_1|+|r_2|)^{\kappa}}.\eeq
It is well known that Lebesgue almost all $\vecxi\in\R^2$ are Diophantine of type $\kappa>2$, and that there is no $\vecxi\in\R^2$ which is Diophantine of type $\kappa<2$ \cite{schmidt_diophantone_1980}.
A specific example of a Diophantine vector of type $\kappa=2$ can be obtained from a degree 3 extension $K$ over $\Q$: If $\xi_1, \xi_2\in K$ are such that $\{1,\xi_1,\xi_2\}$ is a $\Q$-basis for $K$, then $\vecxi=(\xi_1,\xi_2)$ is Diophantine of type $2$ (see Theorem III of Chapter 5 and its proof in \cite{Cassels57DiophantineApproximation}). 

The appearance of Diophantine conditions for the convergence of moments is reminiscent of the same phenomenon in the quantitative Oppenheim conjecture, in particular the pair correlation problem for the values of quadratic forms at integers \cite{Eskin05quadraticforms, marklof_pair_correlation_II_2002, marklof_pair_correlation_2003}. The techniques we use here generalize the approach in \cite{marklof_pair_correlation_II_2002, marklof_mean_square_2005}. 

For $I=I_1\times\cdots\times I_m\subset  \R^m$, $\lambda$ a Borel probability measure on $\T$ and $\vecs=(s_1,\ldots,s_m)\in\C^m$ let
\begin{equation}
\M_\lambda(T,\vecs):=\int_\T (\scrN_{c,T}(I_1,\alpha)+1)^{s_1} \cdots (\scrN_{c,T}(I_m,\alpha)+1)^{s_m}  \lambda(d\alpha).
\end{equation}

We denote the positive real part of $z\in\C$ by $\re_+(z):=\max\{ \re(z), 0 \}$.

\begin{theorem}\label{th:main0}
Let $I=I_1\times\cdots\times I_m\subset  \R^m$ be a bounded box, and $\lambda$ a Borel probability measure on $\T$ with continuous density. Choose $\vecxi\in\R^2$ and $\vecs=(s_1,\ldots,s_m)\in\C^m$, such that one of the following hypotheses holds:
\begin{enumerate}[{\rm ({A}1)}]
\item  $\re_+(s_1)+\ldots+\re_+(s_m)< 2$.
\item  $\vecxi$ is Diophantine of type $\kappa$, and $\re_+(s_1)+\ldots+\re_+(s_m)<2+\frac2{\kappa}$. 
\end{enumerate}
Then
\begin{equation}\label{limit:main}
\lim_{T\to\infty} \M_\lambda(T,\vecs) = \sum_{\veck\in\Z_{\ge 0}^m} (k_1+1)^{s_1}\cdots (k_m+1)^{s_m} E_{c,\vecxi}(\veck,I).
\end{equation}
\end{theorem}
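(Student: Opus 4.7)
The plan is to upgrade the distributional convergence from Theorem \ref{th:prelim} to convergence of mixed moments via a uniform integrability argument; the hypotheses (A1) and (A2) will furnish the quantitative moment bounds needed.

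First, Theorem \ref{th:prelim} asserts that the random vector $(\scrN_{c,T}(I_1,\alpha),\ldots,\scrN_{c,T}(I_m,\alpha))$ on the probability space $(\T,\lambda)$ converges in distribution to a $\Z_{\ge 0}^m$-valued random vector with law $E_{c,\vecxi}(\,\cdot\,,I)$. Since $\veck\mapsto\prod_j(k_j+1)^{s_j}$ is continuous, \eqref{limit:main} follows once the family $\{\prod_j(\scrN_{c,T}(I_j,\alpha)+1)^{s_j}\}_T$ is shown to be uniformly $\lambda$-integrable. A standard sufficient condition is the uniform bound
\[
\sup_{T}\int_\T\prod_{j=1}^{m}\bigl(\scrN_{c,T}(I_j,\alpha)+1\bigr)^{(1+\delta)\sigma_j}\lambda(d\alpha)<\infty
\]
for some $\delta>0$, where $\sigma_j:=\re_+(s_j)$; the replacement of $\re s_j$ by $\sigma_j$ is harmless because $\scrN_{c,T}(I_j,\alpha)+1\ge1$.

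Choose a bounded interval $I\subset\R$ containing $I_1\cup\cdots\cup I_m$, so that $\scrN_{c,T}(I_j,\alpha)\le\scrN_{c,T}(I,\alpha)$ for every $j$. By the generalized Hölder inequality (discarding indices with $\sigma_j=0$), the supremum above is dominated by a power of
\[
\int_\T \bigl(\scrN_{c,T}(I,\alpha)+1\bigr)^\tau\lambda(d\alpha),\qquad \tau:=(1+\delta)\textstyle\sum_{j=1}^m\sigma_j,
\]
and $\delta>0$ may be chosen so that $\tau<2$ under (A1) or $\tau<2+\tfrac{2}{\kappa}$ under (A2). By Jensen's inequality on the probability space $(\T,\lambda)$, the $L^\tau$-norm is bounded by the $L^n$-norm for any integer $n\ge\tau$; so everything reduces to a uniform bound on the $n$-th moment of $\scrN_{c,T}(I,\alpha)+1$ with $n=2$ in case (A1) and $n=3$ in case (A2) (noting that $2/\kappa\le 1$).

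The core technical step, and the main obstacle, is precisely this uniform $n$-th moment bound. Expanding $\scrN_{c,T}(I,\alpha)^n$ as a sum over ordered $n$-tuples $(\vecy_1,\ldots,\vecy_n)\in\scrP_T^n$ of affine lattice vectors whose directions all lie in the common window $J_{N(T)}(I,\alpha)$, and integrating against $\lambda$, the moment becomes essentially a count of $n$-tuples of affine lattice points with pairwise angular separations $O(T^{-2})$. For $n=2$ this is the pair correlation problem, which can be carried out using the $\SL(2,\R)$-equidistribution machinery of \cite{marklof_strombergsson_free_path_length_2010} and gives the bound needed for case (A1). For $n=3$ one must control triples of nearly parallel vectors in $\scrL_\vecxi$; the Diophantine condition on $\vecxi$ (lower bounds on small denominators $|\vecr\cdot\vecxi+m|$) prevents such triples from accumulating too densely and yields a bound of the required order. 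This step generalizes the pair correlation approach of \cite{marklof_pair_correlation_II_2002,marklof_mean_square_2005} to the affine setting, and the exponent $\kappa$ enters through the small-denominator inequality, which is exactly why the threshold in (A2) takes the shape $2+2/\kappa$.
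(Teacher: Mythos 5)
Your high-level framework is the right one: convergence in distribution (Theorem~\ref{th:prelim}) plus uniform integrability of $\prod_j(\scrN_{c,T}(I_j,\alpha)+1)^{\re s_j}$ does give \eqref{limit:main}, and this is in essence what the paper does via the truncation $\M_\lambda^{(K)}$ and the estimate \eqref{limit:main3}. The Hölder step reducing to a single interval $I\supset\cup_j I_j$ and the exponent $\tau=(1+\delta)\sum_j\re_+(s_j)$ is also fine.

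The argument breaks at the Jensen step. You propose to reduce the required uniform bound on $\int_\T(\scrN_{c,T}(I,\alpha)+1)^\tau\,\lambda(d\alpha)$, with $\tau$ fractional, to a uniform bound on the $n$-th moment with $n$ the next integer ($n=2$ under (A1), $n=3$ under (A2)). That reduction would require the integer moments to be uniformly bounded in $T$, and they are not. Under (A2) the limiting distribution $E_c(\,\cdot\,,I)$ satisfies $\sum_k k^3 E_c(k,I)=\infty$ (property (f) in Section~\ref{sec:intro}), so by Fatou's lemma applied along the convergence in distribution, $\int_\T(\scrN_{c,T}(I,\alpha)+1)^3\,\lambda(d\alpha)$ cannot stay bounded as $T\to\infty$. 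Likewise under (A1), $\vecxi\in\Q^2$ is allowed, and then $\sum_k k^2 E_{c,\vecxi}(k,I)=\infty$ (property (d)), so the second moment diverges. Bounding $\|\cdot\|_{L^\tau}$ by $\|\cdot\|_{L^n}$ is the right inequality but in the wrong direction: it dominates the quantity you want by something that is actually infinite, and so yields nothing. This is precisely the delicate feature of the problem — the theorem lives strictly between consecutive integer moments, and the thresholds $2$ and $2+2/\kappa$ are optimal in a way that any rounding-up to integer order destroys.

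Consequently the subsequent plan (expand $\scrN^n$ into $n$-tuples of nearly-parallel affine lattice vectors, bound via pair/triple correlations) cannot be carried out either, since the $n$-tuple counts grow without bound. The paper instead bounds the \emph{tail} contribution $\int_{\scrN\ge K}(\scrN+1)^\sigma\,d\lambda$ directly with the fractional exponent $\sigma$ intact: via \eqref{crude} and Lemma~\ref{lem:upper} this is dominated by a Siegel--Eisenstein-type function $F_{R,\beta}$ with $\beta=\sigma/2$, and Proposition~\ref{spherical77} (no escape of mass for spherical averages) shows it tends to $0$ as $R\to\infty$, uniformly in $T$ large. The Diophantine input enters through Lemma~\ref{lem:daniel}, which controls $\sum_{c,d,m}f(T(cx+dy+m))$ — exactly the small-denominator mechanism you anticipated — but applied to the fractional-$\beta$ sum over the cusp rather than to an integer correlation count. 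To salvage your argument you would need to replace the Jensen step by a direct fractional-moment tail bound of this kind.
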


\begin{remark}\label{rem:div}
The fact that some Diophantine condition is necessary in (A2) can be seen from the following argument. Assume that $\vecr\cdot(\vecxi+\vecm) = 0$ for some $\vecr\in\Z^2\setminus\{\vecnull\}$, $\vecm\in\Z^2$. Then there is a line through the origin (in direction $\alpha_\vecr$, say) that contains infinitely many lattice points of $\scrL_\vecxi$ so that, for any $\epsilon>0$ and sufficiently large $T$,
\begin{equation}
\scrN_{c,T}((-\epsilon,\epsilon),\alpha_\vecr) \gg_{\vecr,\scrL} (1-c)T,
\end{equation}
where the implied constant depends only on $\vecr$ and $\scrL$. This in turn implies that when $\lambda $ is the Lebesgue measure and $s\ge 2$ we have
\begin{equation}
\M_1(T,s) \gg_{c,\vecr,\scrL}  T^{s-2}, 
\end{equation}
and thus any moment with $s>2$ diverges. In the case $s=2$ we have for any bounded interval $I_1\subset\R$
\begin{equation}\label{limit:main111}
\liminf_{T\to\infty} \M_1(T,2) > \sum_{k\in\Z_{\ge 0}^m} (k+1)^{2} E_{c,\vecxi}(\veck,I_1).
\end{equation}
The Diophantine condition in (A2) is however by no means sharp. The statement of Theorem \ref{th:main0} remains valid if in (A2) we use vectors of the form $\vecxi=\vecn \omega +\vecl$ where $\vecn\in\Z^2 \setminus \{ \vecnull \}$ and $\vecl\in\Q^2$ so that $\det(\vecn,\vecl)\notin\Z$, and $\omega\in\R$ is Diophantine of type $\frac{\kappa}{2}$, i.e.~there exists $C>0$ such that $|r\omega+m|\ge C |r|^{-\kappa/2}$ for all $r\in\Z\setminus\{0\}$, $m\in\Z$; we still require that $\re_+(s_1)+\ldots+\re_+(s_m)<2+\frac2{\kappa}$. The proof of this claim follows the same argument as 
the one used for the
two-dimensional Diophantine condition, see Section \ref{sec:singular} for details. Note that Lebesgue almost all $\omega$ are of type $\frac{\kappa}{2}>1$, and type $1$ is the smallest possible, achieved for instance by quadratic surds.
\end{remark}

To explain the key step in the proof of Theorem \ref{th:main0}, define the restricted moments
\begin{equation}
\M_\lambda^{(K)}(T,\vecs):=\int_{\max_j \scrN_{c,T}(I_j,\alpha)\le K} (\scrN_{c,T}(I_1,\alpha)+1)^{s_1} \cdots (\scrN_{c,T}(I_m,\alpha)+1)^{s_m}  \lambda(d\alpha) .
\end{equation}
Theorem \ref{th:prelim} now implies that, for any $K\ge 0$,
\begin{equation}\label{limit:main2}
\lim_{T\to\infty} \M_\lambda^{(K)}(T,\vecs) = \sum_{\substack{\veck\in\Z_{\ge 0}^m\\ |\veck|\le K}} (k_1+1)^{s_1}\cdots (k_m+1)^{s_m} E_{c,\vecxi}(\veck,I),
\end{equation}
where $|\veck|$ denotes the maximum norm of $\veck$. What thus remains to be shown in the proof of Theorem \ref{th:main0} is that under (A1) or (A2),
\begin{equation}\label{limit:main3}
\adjustlimits\lim_{K\to\infty} \limsup_{T\to\infty} \left|\M_\lambda(T,\vecs)-\M_\lambda^{(K)}(T,\vecs)\right| = 0 .
\end{equation}
We will prove this statement in Section \ref{sec:main}.

With \eqref{Ec2}, Theorem \ref{th:main0} has the following implications:

\begin{cor}\label{Cor1}
Let $I=I_1\times I_2\subset  \R^2$ and $\lambda$ be as in Theorem \ref{th:main0}, and assume $\vecxi\in\R^2$ is Diophantine. Then
\begin{equation}
\lim_{T\to\infty} \int_\T \scrN_{c,T}(I_1,\alpha) \; \scrN_{c,T}(I_2,\alpha)\; \lambda(d\alpha) = |I_1\cap I_2|+|I_1|\;|I_2| .
\end{equation}
\end{cor}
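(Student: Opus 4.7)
The plan is to deduce the corollary directly from Theorem~\ref{th:main0} by choosing $m=2$ and $\vecs=(1,1)$. The bulk of the work has already been carried out at the level of Theorem~\ref{th:main0}, so the proof of Corollary~\ref{Cor1} is essentially a bookkeeping exercise: verify that the Diophantine hypothesis is enough to apply the moment convergence, then algebraically rearrange the resulting sum to extract the two-point correlation.

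First I would check that hypothesis (A2) applies. Since $\vecxi$ is Diophantine of some type $\kappa<\infty$, we have $\re_+(1)+\re_+(1)=2<2+\frac{2}{\kappa}$, so (A2) is satisfied for $\vecs=(1,1)$. (Hypothesis (A1) is not available here, which is precisely why some Diophantine assumption on $\vecxi$ is required, consistent with Remark~\ref{rem:div}.) A Diophantine $\vecxi$ is automatically irrational, as $\vecxi\in\Q^2$ would force $\vecr\cdot\vecxi+m=0$ for some nonzero $\vecr$ and some $m$, violating the Diophantine inequality. Hence property (e) gives $E_{c,\vecxi}(\veck,I)=E_c(\veck,I)$.

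Next, applying Theorem~\ref{th:main0} with $\vecs=(1,1)$ yields
\begin{equation*}
\lim_{T\to\infty}\int_\T \bigl(\scrN_{c,T}(I_1,\alpha)+1\bigr)\bigl(\scrN_{c,T}(I_2,\alpha)+1\bigr)\,\lambda(d\alpha)
=\sum_{\veck\in\Z_{\ge 0}^2}(k_1+1)(k_2+1)E_{c,\vecxi}(\veck,I).
\end{equation*}
Expand both sides via $(x+1)(y+1)=xy+x+y+1$. On the left, the three lower-order pieces are handled by \eqref{expectation}, whose application is legitimate because $\scrN_{c,T}(I_j,\cdot)\in L^1(\lambda)$ and the first moments converge to $|I_j|$. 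On the right, the analogous three pieces are computed using property~(c) (the $k_j$-marginal moment equals $|I_j|$) and the fact that $E_{c,\vecxi}(\,\cdot\,,I)$ is a probability measure. Subtracting these from both sides isolates
\begin{equation*}
\lim_{T\to\infty}\int_\T \scrN_{c,T}(I_1,\alpha)\,\scrN_{c,T}(I_2,\alpha)\,\lambda(d\alpha)
=\sum_{\veck\in\Z_{\ge 0}^2}k_1 k_2\, E_{c}(\veck,I),
\end{equation*}
and the right-hand side equals $|I_1\cap I_2|+|I_1|\,|I_2|$ by the identity \eqref{Ec2}, yielding the claimed formula.

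There is no substantive obstacle at this stage of the argument — all the analytic difficulty sits inside Theorem~\ref{th:main0}, specifically in the tail estimate \eqref{limit:main3} that demands the Diophantine condition. The only thing to be careful about is invoking \eqref{Ec2} itself; if one wants to avoid assuming it as known input, one can instead treat \eqref{Ec2} as a \emph{definition} of the quantity $\sum k_1k_2 E_c(\veck,I)$ (it is well-defined and finite since $s=1$ lies below the critical exponent $\sigma<3$ in property~(f)), and then the corollary is genuinely a one-step consequence of Theorem~\ref{th:main0}, with the Poisson-type value $|I_1\cap I_2|+|I_1|\,|I_2|$ identified separately by an independent computation of the second correlation of the limiting point process.
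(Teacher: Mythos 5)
Your proposal is correct and is exactly the (unwritten) argument the paper intends: the paper states Corollary~\ref{Cor1} as a direct implication of Theorem~\ref{th:main0} together with \eqref{Ec2}, and you have simply filled in the routine bookkeeping — checking (A2) with $\vecs=(1,1)$, expanding $(\scrN_1+1)(\scrN_2+1)$, absorbing the lower-order terms via \eqref{expectation} and property~(c), invoking property~(e) to pass from $E_{c,\vecxi}$ to $E_c$, and concluding with \eqref{Ec2}. The one small point you handle that is worth noting explicitly is that a Diophantine $\vecxi$ is necessarily irrational, which is what licenses the use of property~(e); this is right and the argument you give for it is fine.
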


For $f\in C_0(\T^2\times\R)$ (continuous, real-valued and with compact support), we define the two-point correlation function 
\beq R_N^2(f)=\frac1N\sum_{m\in\Z}\sum_{j_1\ne j_2} f(\alpha_{j_1},\alpha_{j_2},N(\alpha_{j_1}-\alpha_{j_2}+m)).\label{eq:twopoint}\eeq
Recall that $N=N(T)$ and $\alpha_j=\alpha_j(T)$ depend on the choice of $T$. A standard argument (see Appendix \ref{324}) shows that Corollary \ref{Cor1} implies

\begin{cor}\label{cor2}
Assume $\vecxi\in\R^2$ is Diophantine. Then, for any $f\in C_0(\T^2\times\R)$
\begin{equation}\label{paircon}
\lim_{T\to\infty}  R_{N(T)}^2(f) = \int_{\T\times\R} f(\alpha,\alpha,s)\,d\alpha\, ds.
\end{equation}
\end{cor}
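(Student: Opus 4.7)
The plan is to derive Corollary \ref{cor2} from Corollary \ref{Cor1} by first rewriting a $\phi$-weighted second moment of the counting function as a two-point correlation against an explicit trapezoidal test function of $s$ alone, and then extending from this special class to arbitrary $f\in C_0(\T^2\times\R)$ via uniform approximation.

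\emph{Key identity.} For a continuous weight $\phi\colon\T\to\R$ and bounded intervals $I_1,I_2\subset\R$, I would expand $\scrN_{c,T}(I_j,\alpha)=\sum_{i,m}\chi_{I_j}(N(T)(\alpha_i-\alpha+m))$ in the double product, split into the diagonal $j_1=j_2$ (which contributes $|I_1\cap I_2|\int_\T\phi\,d\alpha+o(1)$ by equidistribution \eqref{udi}) and the off-diagonal, and substitute $y=N(T)(\alpha_{j_1}-\alpha)$ inside each off-diagonal integral. Uniform continuity lets me replace $\phi(\alpha_{j_1}-y/N(T))$ by $\phi(\alpha_{j_1})$ up to $O(1/N(T))$ per pair, and reindexing $m\mapsto-m$ produces
\[
\int_\T \phi(\alpha)\,\scrN_{c,T}(I_1,\alpha)\,\scrN_{c,T}(I_2,\alpha)\,d\alpha \;=\; |I_1\cap I_2|\int_\T\phi + R_{N(T)}^2\bigl(\phi\otimes \Phi_{I_2,I_1}\bigr) + o(1),
\]
where $\Phi_{I_2,I_1}(s):=\int_\R\chi_{I_2}(y)\chi_{I_1}(y+s)\,dy$ is a trapezoidal function and $(\alpha_1,\alpha_2,s)\mapsto\phi(\alpha_1)\Phi_{I_2,I_1}(s)$ is independent of $\alpha_2$. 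Applying Corollary \ref{Cor1} with density $\phi$ (valid for $\phi\ge 0$ with $\int\phi=1$, and extended by linearity to arbitrary continuous signed $\phi$ via $\phi=\phi_+-\phi_-$) shows that the left-hand side converges to $(\int_\T\phi)(|I_1\cap I_2|+|I_1||I_2|)$, whence
\[
\lim_{T\to\infty} R_{N(T)}^2\bigl(\phi\otimes \Phi_{I_2,I_1}\bigr) \;=\; |I_1||I_2|\int_\T\phi \;=\; \int_{\T\times\R}\phi(\alpha)\,\Phi_{I_2,I_1}(s)\,d\alpha\,ds,
\]
which is Corollary \ref{cor2} for $f(\alpha_1,\alpha_2,s)=\phi(\alpha_1)\Phi_{I_2,I_1}(s)$.

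\emph{Extension to general $f$.} For $f\in C_0(\T^2\times\R)$ supported in $\T^2\times[-K,K]$, a summand of $R_{N(T)}^2(f)$ is nonzero only when $d_\T(\alpha_{j_1},\alpha_{j_2})\le K/N(T)$, so uniform continuity yields
\[
\bigl|R_{N(T)}^2(f)-R_{N(T)}^2(\tilde f)\bigr| \;\le\; \omega_f\bigl(K/N(T)\bigr)\,R_{N(T)}^2(\chi_{[-K,K]}),\qquad \tilde f(\alpha_1,\alpha_2,s):=f(\alpha_1,\alpha_1,s).
\]
The a priori estimate $\limsup_T R_{N(T)}^2(\chi_{[-K,K]})\le 16K/3$ follows from the key identity with $\phi\equiv 1$, $I_1=I_2=[-2K,2K]$, combined with the pointwise bound $\chi_{[-K,K]}\le(3K)^{-1}\Phi_{[-2K,2K],[-2K,2K]}$. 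I would then approximate $\tilde f\in C_c(\T\times\R)$ uniformly by finite tensor sums $\sum_i\phi_i(\alpha)\psi_i(s)$, and each $\psi_i\in C_c(\R)$ uniformly by a linear combination of translated tent functions $\max(0,\delta-|s-s_0|)$; these tents lie in the Step-1 family (take $I_1=[s_0,s_0+\delta]$, $I_2=[0,\delta]$), and linearity of $R_{N(T)}^2$ together with the a priori bound makes the total approximation error $o(1)$ as $T\to\infty$.

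\emph{Main obstacle.} The hardest point is the uniform-in-$T$ control of the approximation errors, which rests on the a priori bound $\limsup_T R_{N(T)}^2(\chi_{[-K,K]})<\infty$. That bound is purchased directly from Corollary \ref{Cor1} via tent-function domination, so the entire argument is driven by the pair-correlation input of Corollary \ref{Cor1}; once this ingredient is in hand, the remaining steps are routine uniform approximation.
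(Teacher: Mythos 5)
Your proposal is correct and follows essentially the same route as the paper's Appendix \ref{324}: rewrite the $\phi$-weighted second moment of $\scrN_{c,T}$ as a pair correlation against a tensor-product test function with trapezoidal $s$-profile (the paper's Lemma \ref{lem:nodiagonal}), transfer the weight from $\alpha$ to the $\alpha_j$'s via uniform continuity, and then extend to general $f\in C_0(\T^2\times\R)$ by approximation with such products. The only difference is organizational: you first collapse $f$ to its diagonal slice $\tilde f(\alpha_1,\alpha_1,s)$ and then approximate by tensor sums of tents backed by the explicit a priori bound on $R_{N(T)}^2(\chi_{[-K,K]})$, whereas the paper first handles weights $g(\alpha_{j_1},\alpha_{j_2})$ directly (Lemma \ref{lem:nodiagonalindex}) and closes by sandwiching $f$ from above and below --- both variants rest on the same positivity of $R_{N(T)}^2$ against dominating trapezoidal test functions.
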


This answers a recent question by Boca, Popa and Zaharescu \cite{boca_pair_2013}. Figure \ref{fig:pair} shows a numerical computation of the pair correlation statistics for $\vecxi=(\sqrt[3]4,\sqrt[3]2)$, $T=1000$, which is close to the limiting density $1$ predicted by Corollary \ref{cor2}. 

\begin{figure}[t]
\centerline{\includegraphics[width=0.9\textwidth]{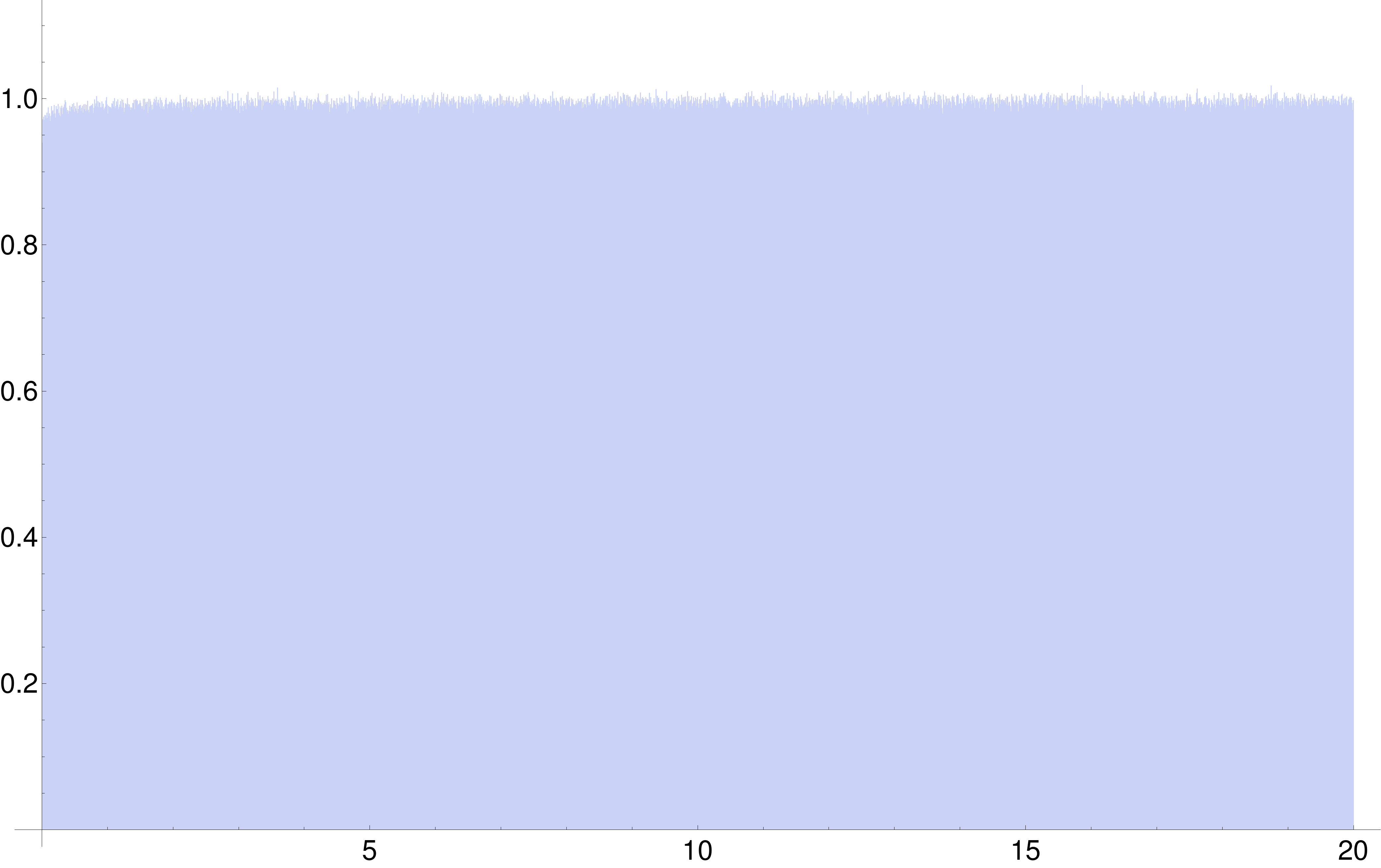}}
\caption{The figure shows a numerical computation of the pair correlation density, for $\vecxi=(\sqrt[3]4,\sqrt[3]2)$, $T=1000$. The computed density is close to $1$, as predicted by Corollary \ref{cor2}. Note that the displayed histogram can be obtained as the sum over all $k$th neighbor spacing distributions. }
\label{fig:pair}
\end{figure}

\begin{remark}
Boca and Zaharescu \cite{boca_correlations_2006} established the convergence of the pair correlation of directions in the lattice $\Z^2+\vecxi$  {\em on average} over $\vecxi$, in the case of lattice points in the square $[-T,T]^2$ (rather than a disc of radius $T$). Our approach can be adapted to this case, and to more general star shaped domains $\scrD$ dilated by $T$. Provided the projected lattice points in $T\scrD$ have a continuous limiting density $\rho_\scrD$ on $\T$, we have, under the conditions of Corollary~\ref{cor2},
\begin{equation}\label{paircon2}
\lim_{T\to\infty}  R_{N(T)}^2(f) = \int_{\T\times\R} f(\alpha,\alpha,s)\,\rho_\scrD(\alpha)^2\, d\alpha\, ds.
\end{equation}
For instance in the case when $\scrD$ is the square $[-1,1]^2$, we have 
\begin{equation}
\rho_\scrD(\alpha) = \frac{\pi}{4\cos^2[2\pi(\alpha-\nu)]} \qquad \text{if } \alpha\in[-\tfrac18,\tfrac18]+\nu,\quad \nu=0,\tfrac14,\tfrac12,\tfrac34. 
\end{equation}
In particular,
\begin{equation}
\int_{\T} \rho_\scrD(\alpha)^2\, d\alpha = \frac{\pi}{3},
\end{equation}
which yields the constant observed in \cite{boca_correlations_2006}.
The proof of \eqref{paircon2} follows from Corollary \ref{cor2} by choosing test functions $f(\alpha,\beta,s)$ whose support in $\alpha$ and $\beta$ is in an $\epsilon$-neighborhood around any given $\alpha_0,\beta_0\in\T$, with $\epsilon>0$ arbitrarily small. 
\end{remark}

\begin{remark}
The work of Elkies and McMullen \cite{ElkiesMcM04} shows that the gap distribution and other local statistics of the fractional parts of $\sqrt{n}$, $n=1,\ldots,N,$ is governed by the same limiting point process as in Theorem \ref{th:prelim}. We prove the analogue of Theorem \ref{th:main0} in this case \cite{el-baz_two-point_2013}. Note that the sparse subsequence of perfect squares leads to similar divergences as those discussed in Remark \ref{rem:div}, and should therefore be removed.
\end{remark}

\begin{remark}
In the case $\vecxi=\vecnull$, it is natural to restrict the attention to primitive lattice points, i.e., consider the set of directions without multiplicity. In this case the problem is closely related to the statistics of Farey fractions. A major difference to the present study is that in the case of primitive lattice points all moments are finite, and the analogue of Theorem \ref{th:main0} holds without any restriction on $\vecs$. The second and higher moments are non-Poissonian \cite{BocaZaharescu05CorrelationsFarey}.
\end{remark}

\begin{remark}
The characterization of point processes whose two-point statistics is Poisson was a popular problem in the statistics literature of the 1970s, see e.g.~\cite{BaddeleySilverman84_biometrics} and the literature surveyed in Section 3. Note that the process constructed by Kallenberg \cite{kallenberg_counterexample_1977} is based on the space of lattices (as pointed out by Kingman in \cite{kallenberg_counterexample_1977}) and therefore closely related to the limiting process in Theorem \ref{th:prelim}.
\end{remark}

\section{The space of (affine) lattices}

Let $G=\SL(2,\R)$ and $\Gamma=\SL(2,\Z)$. Define $G'=G\ltimes \R^2$ by 
\beq (M,\vecxi)(M',\vecxi')=(MM',\vecxi M'+\vecxi'),\eeq
and let $\Gamma'=\Gamma\ltimes \Z^2$ denote the integer points of this group. 
In the following, we will embed $G$ in $G'$ via the homomorphism $M\mapsto (M,\vecnull)$ and identify $G$ with the corresponding subgroup in $G'$.
We will refer to the homogeneous space $\Gamma\quot G$ as the space of lattices and $\Gamma'\quot G'$ as the space of affine lattices. The natural right action of $G'$ on $\R^2$ is given by  $\vecx\mapsto\vecx (M,\vecxi) := \vecx M + \vecxi$, with $(M,\vecxi)\in G'$.


Given a bounded interval $I\subset\R$ and $c\ge 0$, define the triangle/trapezoid
\begin{equation}\label{CcI}
\fC_c(I) =\{ (x,y)\in\R^2 \colon c< x<1, \; (1-c^2) y\in 2 x I   \}
\end{equation}
and set, for $g\in G'$ and any bounded subset $\fC\subset\R^2$,
\begin{equation}
\scrN(g,\fC)= \# ( \fC \cap \Z^2 g) .
\end{equation}
By construction, $\scrN(\,\cdot\,,\fC)$ is a function on the space of affine lattices, $\Gamma'\quot G'$.

Let 
\begin{equation}
\Phi^t =\begin{pmatrix} \e^{-t/2} & 0 \\ 0 & \e^{t/2} \end{pmatrix}, \qquad
k(\phi) = \begin{pmatrix} \cos\phi & -\sin\phi \\ \sin\phi & \cos\phi \end{pmatrix}.
\end{equation}

\begin{figure}
\centerline{\includegraphics[width=0.7\textwidth]{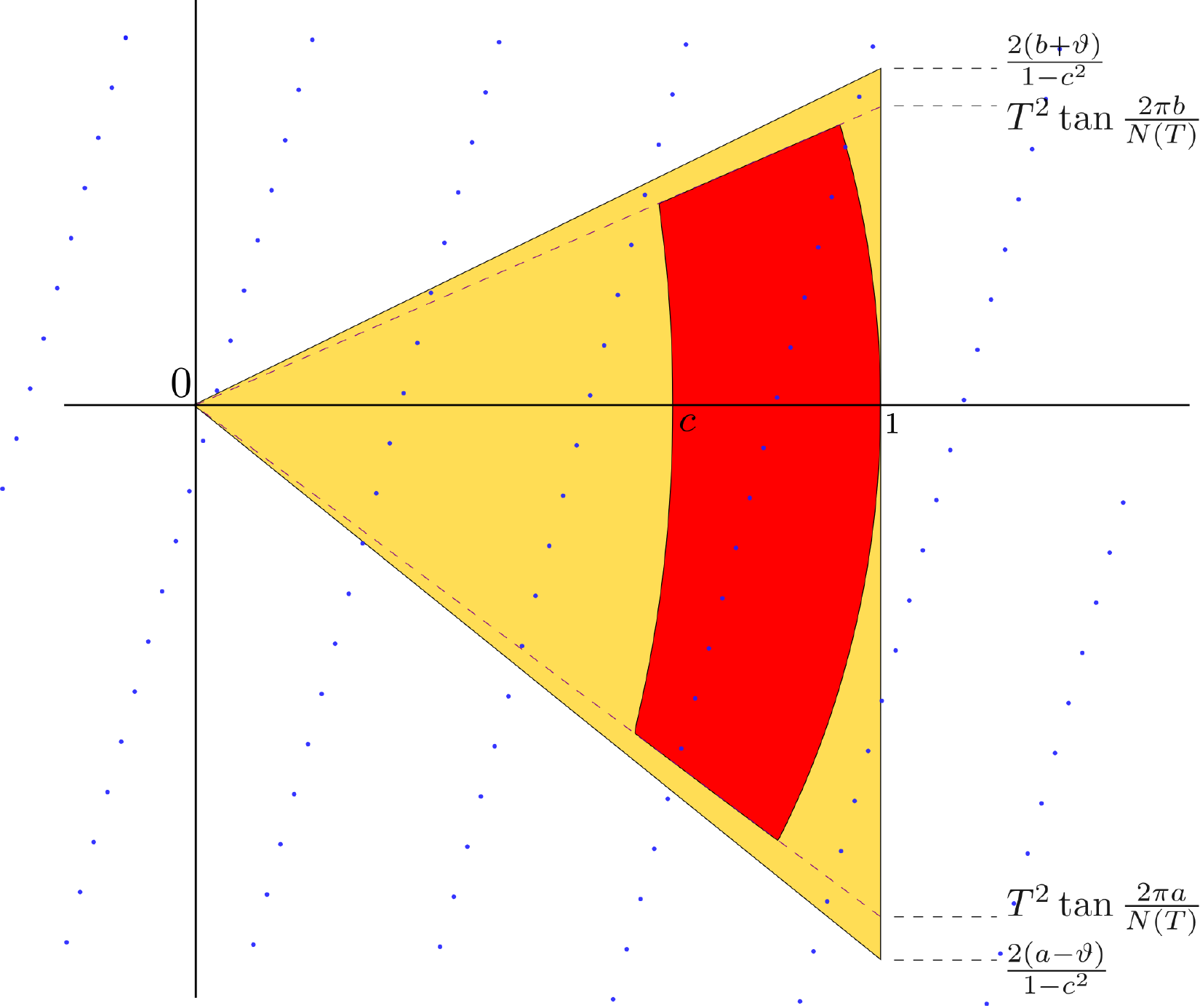}}
\caption{Here $I=[a,b]$ with $a<0<b$. The dark (red) area corresponds to counting in $\scrN_{c,T}(I,\alpha)$, while the grey (yellow) triangle is the bound we use in \eqref{crude}.}
\label{fig:triangle}
\end{figure}

An elementary geometric argument shows that, given $I\subset\R$ and $\vartheta>0$, there exists $T_0>0$ such that for all $\alpha\in\T$, $\vecxi\in\R^2$, $M_0\in \Gamma'\quot G'$ and $T=\e^{t/2}\ge T_0$,
\begin{equation}\label{crude}
\scrN_{c,T}(I,\alpha) \le \scrN\left((1,\vecxi) M_0 k(2\pi\alpha)\Phi^{t}, \fC_0(I + [-\vartheta,\vartheta]) \right).
\end{equation}
Indeed, the quantity on the left hand side counts the number of lattice points in the intersection of an annulus and a cone, while that on the right hand side counts lattice points in a triangle that properly contains the closure of this set. This is illustrated in Figure~\ref{fig:triangle}.
The observation \eqref{crude} relates our original counting function $\scrN_{c,T}(I,\alpha)$ to a function on the space of lattices. Since we will only require upper bounds, the crude estimate \eqref{crude} is sufficient. 

A more refined statement is used in \cite[Sect.~9.4]{marklof_strombergsson_free_path_length_2010}, where sets $\fC^{(t)}(I)$ are constructed such that $\scrN_{c,T}(I,\alpha) = \scrN\left((1,\vecxi) k(2\pi\alpha)\Phi^{t}, \fC^{(t)}(I) \right)$ and the sequence of sets $\fC^{(t)}(I)$ converges to $\fC_c(I)$  as $t\to\infty$. 

A convenient parametrization of $M\in G$ is given by the the Iwasawa decomposition
\begin{equation}\label{iwasawa}
M=n(u)a(v)k(\varphi)
\end{equation}
where
\begin{equation}
n(u)=\begin{pmatrix} 1 & u \\ 0 & 1 \end{pmatrix},\qquad
a(v)=\begin{pmatrix} v^{1/2} & 0 \\ 0 & v^{-1/2} \end{pmatrix} ,
\end{equation}
with $\tau=u+\mathrm{i} v$ in the complex upper half plane $\H=\{ u+\mathrm{i} v \in\C \colon v>0\}$ and $\phi\in[0,2\pi)$. 
A convenient parametrization of $g\in G'$ is then given by $\H\times [0,2\pi) \times \R^2$ via the decomposition
\begin{equation}\label{ida}
g = (1,\vecxi) n(u)a(v)k(\varphi)\eqqcolon(\tau,\phi;\vecxi).
\end{equation}
In these coordinates, left multiplication on $G$ becomes the (left) group action
\begin{equation}
g\cdot (\tau,\varphi;\vecxi)=(g\tau,\varphi_g;\vecxi g^{-1})
\end{equation}
where for 
\begin{equation}
g =(1,\vecm)\abcd
\end{equation}
we have:
\begin{equation}
g\tau = u_g+\mathrm{i} v_g = \frac{a\tau+b}{c\tau+d}
\end{equation}
and thus
\begin{equation}
v_g=\im (g \tau)=\frac{v}{|c\tau+d|^2};
\end{equation}
furthermore
\begin{equation}
\varphi_g=\varphi+\arg(c\tau+d),
\end{equation}
and
\begin{equation}
\vecxi g^{-1} = (d\xi_1-c\xi_2,-b\xi_1+a\xi_2) - \vecm  .
\end{equation}

The space of lattices has one cusp, which in the above coordinates appears at $v\to\infty$. The following lemma tells us that $\scrN(g,\fC)$ is bounded in the cusp unless $-\xi_1$ is close to an integer, in case of which the function is at most of order $v^{1/2}$.

\begin{lemma}\label{lem:upper}
For any bounded $\fC\subset\R^2$, $g=(1,\vecxi)(M,0)\in G'$ with $M$ as in \eqref{iwasawa} and $v\ge 1$,
\begin{equation}\label{eq101}
\scrN(g,\fC) \le (2 r v^{1/2}+1) \, \# ((\Z+\xi_1) \cap [-rv^{-1/2},rv^{-1/2}])
\end{equation}
where $r=\sup \{\|\vecx\| \colon \vecx\in\fC \}$. If $v> 4 r^2$ then, for any $\sigma\ge 0$,
\begin{equation}\label{eq102}
\scrN(g,\fC)^\sigma \le (2 r v^{1/2}+1)^\sigma \, \# ((\Z+\xi_1) \cap [-rv^{-1/2},rv^{-1/2}]).
\end{equation} 
\end{lemma}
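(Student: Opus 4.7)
The strategy is to exploit the Iwasawa decomposition $g=(1,\vecxi)n(u)a(v)k(\varphi)$ to write the lattice $\Z^2 g$ explicitly, observe that rotations preserve the norm of a vector (and hence bound its coordinates), and then count the admissible $m,n\in\Z$ in sequence.

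First, since $(1,\vecxi)(M,0)=(M,\vecxi M)$ with $M=n(u)a(v)k(\varphi)$, the lattice $\Z^2 g$ equals $(\Z^2+\vecxi)M$. A direct computation with $n(u)$ and $a(v)$ shows that for $(m,n)\in\Z^2$,
\begin{equation}
(m+\xi_1,n+\xi_2)\,n(u)a(v)=\bigl((m+\xi_1)v^{1/2},\,((m+\xi_1)u+n+\xi_2)v^{-1/2}\bigr).
\end{equation}
Because $k(\varphi)$ is orthogonal, the Euclidean norm of a point of $\Z^2 g$ is the same as that of the vector above. Thus, if a lattice point lies in $\fC\subset B(\vecnull,r)$, then the absolute value of each of its pre-rotation coordinates is at most $r$. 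The first coordinate yields the necessary condition
\begin{equation}
m+\xi_1\in (\Z+\xi_1)\cap[-rv^{-1/2},rv^{-1/2}],
\end{equation}
which contributes the factor $\#\bigl((\Z+\xi_1)\cap[-rv^{-1/2},rv^{-1/2}]\bigr)$.

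For each admissible $m$, the second coordinate forces $n+\xi_2$ to lie in an interval of length $2rv^{1/2}$ (centered at $-(m+\xi_1)u$). Since $n$ ranges over $\Z$, there are at most $2rv^{1/2}+1$ admissible choices of $n$. Multiplying these two counts yields the bound \eqref{eq101}. The hypothesis $v\ge 1$ is used only to ensure that $v^{1/2}\ge 1$ so that the factor $2rv^{1/2}+1$ is a genuine upper bound for the length-plus-one count (and to make the expression nonnegative).

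The second bound \eqref{eq102} is then a cheap consequence: when $v>4r^2$, the interval $[-rv^{-1/2},rv^{-1/2}]$ has length strictly less than $1$, so it contains at most one point of the coset $\Z+\xi_1$, meaning that $\#\bigl((\Z+\xi_1)\cap[-rv^{-1/2},rv^{-1/2}]\bigr)\in\{0,1\}$. If this indicator is $0$, both sides of \eqref{eq102} vanish since $\scrN(g,\fC)=0$. If it is $1$, then \eqref{eq101} gives $\scrN(g,\fC)\le 2rv^{1/2}+1$, and raising both sides to the power $\sigma\ge 0$ yields \eqref{eq102}. I do not anticipate any serious obstacle: the only subtlety is the bookkeeping with the shifted lattice $\Z+\xi_1$ versus $\Z-\xi_1$, which is resolved by keeping $\vecxi$ to the \emph{left} of $M$ in the Iwasawa decomposition of $g$.
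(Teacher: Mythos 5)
Your proof is correct and follows essentially the same route as the paper's: pass to the shifted lattice $(\Z^2+\vecxi)M$, observe that $k(\varphi)$ preserves the Euclidean norm so the containment $\fC\subset B(\vecnull,r)$ translates into coordinate bounds on $(\Z^2+\vecxi)n(u)a(v)$, then count admissible $m$ (giving the $\#\bigl((\Z+\xi_1)\cap[-rv^{-1/2},rv^{-1/2}]\bigr)$ factor) and for each $m$ bound the number of admissible $n$ by $2rv^{1/2}+1$. The only cosmetic difference is that the paper inserts the intermediate step $\fC\subset\fD_r\subset[-r,r]^2$ and phrases the $n$-count as $\sup_{\xi_2}\#\bigl((\Z+\xi_2)\cap[-rv^{1/2},rv^{1/2}]\bigr)$, whereas you argue directly on the interval of length $2rv^{1/2}$; these are the same estimate. (One small inaccuracy: your claim that $v\ge 1$ is needed to make $2rv^{1/2}+1$ a valid bound is not quite right — the interval-of-length-$L$-contains-at-most-$L+1$-integers bound holds for all $L>0$, so \eqref{eq101} in fact holds for all $v>0$; the hypothesis $v\ge1$ is just the regime where the lemma is applied. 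This does not affect the validity of your argument.)
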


\begin{proof}
Let $\fD_r$ be the smallest closed disk of radius $r$ centered at zero which contains $\fC$. Then
\begin{equation}
\begin{split}
\scrN(g,\fC) & \le \scrN(g,\fD_r) \\
& = \# ( \fD_r \cap (\Z^2+\vecxi) n(u) a(v) ) \\
& \le \# ( [-r,r]^2 \cap (\Z^2+\vecxi) n(u) a(v) ) \\
& = \# ( ([-rv^{-1/2},rv^{-1/2}]\times[-rv^{1/2},rv^{1/2}]) \cap (\Z^2+\vecxi) n(u) ) \\
& \le \sup_{\xi_2} \# ( ([-rv^{1/2},rv^{1/2}]) \cap (\Z+\xi_2) )\times \# ( [-rv^{-1/2},rv^{-1/2}] \cap (\Z+\xi_1) ) \\
& \le (2r v^{1/2}+1)\times \# ( [-rv^{-1/2},rv^{-1/2}] \cap (\Z+\xi_1) ) .
\end{split}
\end{equation}
This proves \eqref{eq101}. The second inequality \eqref{eq102} follows from the fact that $\# ((\Z+\xi_1) \cap [-rv^{-1/2},rv^{-1/2}])\in\{0,1\}$.
\end{proof}

To deal with the case of mixed moments, we note that 
\begin{equation}
\scrN(g,\fC_1)^{\sigma_1}\cdots \scrN(g,\fC_m)^{\sigma_m}\le \scrN(g,\fC_1\cup\cdots\cup\fC_m)^{\sigma_1+\ldots+\sigma_m}. 
\end{equation}
%
%
%
%

\section{Properties of the limiting distribution\label{sec:props}}

In what follows we write $f\ll g$ if there exists a positive constant $C$ such that $|f|\le C|g|$, and $f\asymp g$ means $f\ll g\ll f$.

According to \cite{marklof_strombergsson_free_path_length_2010} (see Theorems 6.3, 6.5 and subsequent remarks, and Lemma 9.5), the limit distribution of Theorem \ref{th:prelim} is given by 
\begin{equation}\label{eq:limit_measures}
E_{c,\vecxi}(\veck,I)=
	\begin{cases}
	\mu_1(\{ M\in X_1: \#( \Z^2 M \cap \fC_c(I_j))= k_j \forall j\}) & \text{if $\vecxi\in\Z^2$}\\
	\mu_q(\{ M\in X_q: \#( (\Z^2+\frac{\vecp}{q}) M \cap \fC_c(I_j))= k_j \forall j \}) & \text{if $\vecxi=\frac{\vecp}{q}\in\Q^2\setminus\Z^2 $}\\
	\mu(\{ g\in X: \#( \Z^2 g \cap \fC_c(I_j))= k_j \forall j \}) & \text{if $\vecxi\notin\Q^2$,}
	\end{cases}
\end{equation}
with $\fC_c(I_j)$ as in \eqref{CcI}. The measures $\mu$, $\mu_1$, and $\mu_q$ are the Haar probability measures on the homogeneous spaces
\begin{equation}
 X=\Gamma'\quot G',\qquad 
 X_1=\Gamma\quot G, \qquad 
 X_q=\Gamma_q\quot G,
\end{equation}
respectively. Here $\Gamma_q$ denotes the congruence subgroup
\begin{equation}
\Gamma_q=\{\gamma\in\Gamma\colon \gamma\equiv 1\bmod q\} .
\end{equation}
We have explicitly (with $M$ as in \eqref{iwasawa})
\begin{equation}\label{eq:measures}
d\mu_1(M) = \frac{3}{\pi^2} \; \frac{du\,dv\,d\phi}{v^2},\qquad d\mu(M,\vecxi) = d\mu_1(M) \, d\vecxi .
\end{equation}

The limit distribution \eqref{eq:limit_measures} is evidently independent of $\lambda$ and $\scrL$, stated as property (a) in Section \ref{sec:intro}.  

Property (b) follows from two facts. Firstly, for any $u\in\R$, we have
\begin{equation}
\fC_c(I_j) n(u) = \fC_c(I_j+u).
\end{equation}
Secondly, the measures $\mu_1$, $\mu_q$, $\mu$ are $\SL(2,\R)$-invariant, and $n(u)\in\SL(2,\R)$.

Property (e) follows from the invariance of $\mu$ under translations $\{1\}\ltimes\R^2$. 

Property (c) follows from \eqref{expectation}. It may also be derived directly from Siegel's integral formula \cite{siegel45} 
\begin{equation}\label{Siegel}
\int_{X_1} \sum_{\vecm\in\Z^2\setminus\{\vecnull\}} F(\vecm M) d\mu_1(M) = \int_{\R^2} F(\vecx)\,d\vecx ,
\end{equation}
which holds for any $F\in L^1(\R^2)$ (Siegel's formula holds of course in any dimension). In a similar vein, formulas \eqref{Ec2} and \eqref{Ec1} follow from the following variant of Siegel's formula: for any $F_1,F_2\in L^1(\R^2)$,
\begin{equation} \label{siegel22}
\int_{X} \sum_{\vecm_1\ne \vecm_2\in\Z^2} F_1(\vecm_1 M+\vecxi) F_2(\vecm_2 M+\vecxi) d\mu(M,\vecxi)  = \int_{\R^2} F_1(\vecx)\,d\vecx  \int_{\R^2} F_2(\vecx)\,d\vecx. 
\end{equation}
We prove this fact in Appendix \ref{app:Siegel}.

Properties (d) and (f) follow from calculations similar to \cite{marklof_2000}. 
We write $g=(1,\vecxi')(M,0)\in G'$ with $M$ as in \eqref{iwasawa}; we use the notation $\vecxi'$ to distinguish this vector from the fixed vector $\vecxi$ that determines the distribution $E_{c,\vecxi}(\veck,I)$.
We have
\begin{equation}\label{eq:main_asymptotics_equation}
\scrN(g,\fC_c(I)) = \sum_{\vecm\in\Z^2} \chi_\phi( (m_1+\xi_1') v^{1/2}  , [(m_1+\xi_1') u + (m_2+\xi_2')] v^{-1/2} ) ,
\end{equation}
where $\chi_\phi$ is the characteristic function of the set $\fC_c(I) k(\phi)^{-1}$. Assume without loss of generality that $\xi_1'\in[-\frac12,\frac12]$. Then, for $v$ sufficiently large,
\begin{equation}\label{eq:main_asymptotics_equation2}
\begin{split}
\scrN(g,\fC_c(I)) & = \sum_{m_2\in\Z} \chi_\phi( \xi_1' v^{1/2}  , [\xi_1' u + (m_2+\xi_2')] v^{-1/2} ) \\
& = v^{1/2} \tilde\chi_\phi( \xi_1' v^{1/2} ) + O(1),
\end{split}
\end{equation}
where
\begin{equation}
\tilde\chi_\phi(\eta) = \int_\R \chi_\phi( \eta  , t ) \, dt .
\end{equation}

Consider first the case of $E_{c,\vecxi}$ for $\vecxi\in\Z^2$, as in \eqref{eq:limit_measures}. Then, for $k_0\to\infty$,
\begin{equation}
\begin{split}
 \sum_{k=k_0}^\infty E_{c,\vecxi}(k,I)
 & = \mu_1\big( \scrN(g,\fC_c(I)) \ge k_0 \big) \\
 &= \mu_1\big( v^{1/2} \tilde\chi_\phi( 0 ) \ge k_0 +O(1) \big) 
 \\
 &= \frac{3}{\pi^2} \int_{u=0}^1 \int_{\phi=0}^{2\pi} \int_{v\ge (k_0+O(1))^2 }  \tilde\chi_\phi(0)^2 \frac{dv\,d\phi\, du}{v^2}\\
 &\sim \frac{3}{\pi^2}\, k_0^{-2} \int_{\phi=0}^{2\pi} \tilde\chi_\phi(0)^2d\phi .
\end{split}
\end{equation}
This proves property (d) for $\vecxi\in\Z^2$. The case of other $\vecxi\in\Q^2$ is analogous. In the case $\vecxi\notin\Q^2$ we use the measure $\mu$ from \eqref{eq:measures} to get 
\begin{equation}
\begin{split}
 \sum_{k=k_0}^\infty E_{c,\vecxi}(k,I)
 & = \mu\big( \scrN(g,\fC_c(I)) \ge k_0 \big) \\
 &= \mu\big( v^{1/2} \tilde\chi_\phi( \xi_1' v^{1/2} )  \ge k_0 +O(1) \big) 
 \\
 &= \frac{3}{\pi^2} \int_{u=0}^1 \int_{\phi=0}^{2\pi} \int_{v\ge (k_0+O(1))^2 } \int_{\xi_1'=-\frac12}^{\frac12} \tilde\chi_\phi( \xi_1' v^{1/2} )^2 \, d\xi_1'\,\frac{dv\,d\phi\, du}{v^2}\\
 &\sim \frac{2}{\pi^2}\, k_0^{-3} \int_{\phi=0}^{2\pi}  \int_{\eta\in\R}  \tilde\chi_\phi(\eta)^3d\phi\, d\eta ,
\end{split}
\end{equation}
which proves property (f).

To deduce \eqref{Ec2}, note that 
\beq \sum_{\veck\in\Z_{\ge 0}^2} k_1 k_2 E_{c}(\veck,I_1\times I_2) = \int_X \sum_{\vecm_1, \vecm_2 \in \Z^2} \chi_{\fC_c(I_1)}(\vecm_1 M + \vecxi) \chi_{\fC_c(I_2)}(\vecm_2 M + \vecxi)  d\mu(M,\vecxi). \eeq
To evaluate the off-diagonal part of the right-hand side we apply \eqref{siegel22}, which yields $|I_1| \, |I_2|$ since, for a bounded interval $I \subset \R$, the area of $\fC_c(I)$ is precisely $|I|$.
As for the diagonal part, let $\fC=\fC_c(I_1) \cap \fC_c(I_2)$ and note that
\begin{multline} \label{Ccdiag} 
\int_X  \sum_{\vecm \in \Z^2} \chi_{\fC}(\vecm M + \vecxi) d\mu(M,\vecxi) = \int_{X_1} \sum_{\vecm \in \Z^2} \int_{\Z^2\backslash\R^2} \chi_{\fC}((\vecm+\bm\zeta) M) \, d\bm\zeta\, d\mu_1(M) \\
= \int_{X_1} \int_{\R^2} \chi_{\fC}(\vecx M) \, d\vecx\, d\mu_1(M) 
=\int_{\R^2} \chi_{\fC}(\vecx) d\vecx = |I_1 \cap I_2| .  
\end{multline}
This concludes the proof of \eqref{Ec2}.

\section{Escape of mass}

We define the abelian subgroups
\[\Gamma_\infty= \left\{\begin{pmatrix}1 &m\\ 0 &1\end{pmatrix}\colon m\in\Z\right\}\subset\Gamma  \]
and
\[\Gamma_\infty'= \left\{\left(\begin{pmatrix}1 &m_1\\ 0 &1\end{pmatrix}, (0,m_2) \right)\colon (m_1,m_2)\in\Z^2\right\}\subset\Gamma' . \]  
These subgroups are the stabilizers of the cusp at $\infty$ of $\Gamma\quot G$ and  $\Gamma'\quot G'$, respectively.

Denote by $\chi_R$ the characteristic function of $[R,\infty)$ for some $R\ge 1$, i.e.~$\chi_R(v)=0$ if $v<R$ and $\chi_R(v)=1$ if $v\ge R$. 
For a fixed real number $\beta$ and a continuous function $f:\R\to\R$ of rapid decay at $\pm\infty$, define the function $F_{R,\beta}\colon \H\times\R^2\to \R$ by 
\begin{equation}
\begin{split}
F_{R,\beta}\left(\tau; \vecxi\right)&=\sum_{\gamma\in\Gamma_\infty\quot \Gamma}\sum_{m\in\Z}f (((\vecxi\gamma^{-1})_1+m)v^{1/2}_\gamma) v^\beta_\gamma\chi_R(v_\gamma) \\
&=\sum_{\gamma\in\Gamma_\infty'\quot \Gamma'} f_\beta(\gamma g) ,
\end{split}
\end{equation}
where $f_\beta:G'\to\R$ is defined by 
\begin{equation}
f_\beta((1,\vecxi) n(u)a(v)k(\varphi)) := f(\xi_1 v^{1/2}) v^\beta \chi_R(v).
\end{equation}
We view $F_{R,\beta}\left(\tau; \vecxi\right)=F_{R,\beta}\left(g\right)$ as a function on $\Gamma'\quot G'$ via the identification \eqref{ida}. 

The main idea behind the definition of $F_{R,\beta}\left(\tau; \vecxi\right)$ is that we have for $v\ge 1$
\begin{equation}
F_{R,\beta}\left(\tau; \vecxi\right) = \sum_{m\in\Z} [f((\xi_1+m)v^{1/2})+f((-\xi_1+m)v^{1/2})] v^\beta \chi_R(v) ,
\end{equation}
which shows that, for the appropriate choice of $f$ and $\beta=\frac12(\sigma_1+\ldots+\sigma_m)$, and $v\ge R$ with $R$ sufficiently large,
\begin{equation}\label{404}
\scrN(g,\fC_1)^{\sigma_1}\cdots \scrN(g,\fC_m)^{\sigma_m} \le F_{R,\beta}\left(\tau; \vecxi\right).
\end{equation}

The following proposition establishes under which conditions there is no escape of mass in the equidistribution of horocycles. It generalizes results in \cite{marklof_pair_correlation_II_2002, marklof_pair_correlation_2003, marklof_mean_square_2005}.

\begin{prop}\label{prop:ilya}
Let $\vecxi\in\R^2$, $\beta\ge 0$, $M\in G$, and $h\in C_0(\R)$. Assume that one of the following hypotheses holds:
\begin{enumerate}[{\rm ({B}1)}]
\item  $\beta<1$.
\item $\vecxi$ is Diophantine of type $\kappa$, and $\beta<1+\frac1{\kappa}$. 
\end{enumerate}
Then
\beq 
\lim_{R\to \infty}\limsup_{v\to 0} \bigg| \int_{u\in\R} F_{R,\beta}\left((1,\vecxi)Mn(u)a(v)\right) h(u)du \bigg|=0.
\eeq
\end{prop}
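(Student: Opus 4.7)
The proof proceeds by unfolding the Poincar\'e series $F_{R,\beta}(g)=\sum_{\gamma\in\Gamma_\infty'\backslash\Gamma'}f_\beta(\gamma g)$ and summing the resulting horocycle contributions. First I interchange sum and integral to obtain
\[
\int_\R F_{R,\beta}((1,\vecxi)Mn(u)a(v))\,h(u)\,du = \sum_{\gamma\in\Gamma_\infty'\backslash\Gamma'} I_\gamma,
\qquad I_\gamma := \int_\R f_\beta(\gamma(1,\vecxi)Mn(u)a(v))\,h(u)\,du.
\]
Parametrize $\gamma=(\gamma_0,\vecm)$ by the coprime bottom row $(c,d)$ of $\gamma_0$ and an integer $n=m_1d-m_2c\in\Z$ (uniquely indexing $\vecm\in\Z^2/\Z(c,d)$ via Bezout), and set $(c_\gamma,d_\gamma):=(c,d)M$. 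A direct computation using the $G'$-action on the Iwasawa coordinates gives
\[
v_\gamma(u)=\frac{v}{(c_\gamma u+d_\gamma)^2+c_\gamma^2v^2}, \qquad \xi_{1,\gamma}=n+d\xi_1-c\xi_2,
\]
the latter being independent of $u$ and $v$.

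Using $\supp f\subseteq[-A,A]$ and $\supp h\subseteq[-U,U]$, the summand $I_\gamma$ is nonzero only if (a) $|c_\gamma|\le(Rv)^{-1/2}$ and $|d_\gamma|\ll_U(Rv)^{-1/2}$ (so $(c_\gamma,d_\gamma)\in\Z^2M$ lies in a box of side $\asymp(Rv)^{-1/2}$), and (b) $|\xi_{1,\gamma}|\le AR^{-1/2}$, which pins $n$ as the integer nearest to $-(d\xi_1-c\xi_2)$ and requires $\delta_{c,d}:=\|d\xi_1-c\xi_2\|\le AR^{-1/2}$. Changing variables $s=v_\gamma(u)$ in $I_\gamma$ produces a one-dimensional integral over $s\in[R,\min\{A^2\xi_{1,\gamma}^{-2},(c_\gamma^2v)^{-1}\}]$, from which I extract (for $\beta>\tfrac12$) the pointwise bound
\[
|I_\gamma|\ll_{f,h,M}\frac{v^{1/2}}{|c_\gamma|}\,\delta_{c,d}^{1-2\beta}+v^{1-\beta}|c_\gamma|^{-2\beta},
\]
the second ``turning-point'' term being the contribution from $s$ close to the maximum $(c_\gamma^2v)^{-1}$; a simpler bound $|I_\gamma|\ll v^{1/2}|c_\gamma|^{-1}R^{(2\beta-1)/2}$ applies when $\beta<\tfrac12$.

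It remains to sum over admissible $(c,d,n)$. Under hypothesis (B1), with $\beta<1$ and no Diophantine assumption, the condition $\delta_{c,d}\le AR^{-1/2}$ confines $(c,d)$ to strips of width $\sim R^{-1/2}$ in direction $(\xi_1,\xi_2)$; lattice counting in $\Z^2M$ within these strips, organized dyadically in $|c_\gamma|$, yields $\sum|I_\gamma|\ll R^{\beta-1}\to 0$ as $R\to\infty$. Under hypothesis (B2), the Diophantine lower bound $\delta_{c,d}\ge C|(c,d)|^{-\kappa}$ both forces $|(c,d)|\ge C'R^{1/(2\kappa)}$ and replaces $\delta_{c,d}^{1-2\beta}$ by $|(c,d)|^{\kappa(2\beta-1)}$ in the summand; a weighted lattice count over the annulus $C'R^{1/(2\kappa)}\le|(c,d)|\ll(Rv)^{-1/2}$ gives $\sum|I_\gamma|\ll R^{\beta-1-1/\kappa}\to 0$ provided $\beta<1+1/\kappa$.

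The hard part is this final summation: one must simultaneously control $|c_\gamma|^{-1}$ (sensitive to vectors in $\Z^2M$ close to the $d_\gamma$-axis, depending on the geometry of $M$) and $\delta_{c,d}^{1-2\beta}$ (for $\beta>\tfrac12$, sensitive to $(c,d)$ for which $d\xi_1-c\xi_2$ is exceptionally close to an integer). These two small-scale contributions interact, and without the Diophantine hypothesis the latter is what forces the restriction $\beta<1$ in (B1); the Diophantine assumption in (B2) quantitatively suppresses such $(c,d)$, allowing the threshold to be pushed up to $\beta<1+1/\kappa$. The bulk of the proof is this careful organization of the sum into dyadic pieces and strips.
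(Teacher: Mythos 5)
Your overall strategy --- unfold the Poincar\'e series over $\Gamma'_\infty\backslash\Gamma'$, parametrize by $(c,d,n)$, bound each summand $I_\gamma$ by a change of variable, and sum dyadically using the rapid decay of $f$ together with the Diophantine condition --- is in spirit the same as the paper's, which organizes the change of variable differently (via $t=(u+d/c)v^{-1}$) and, crucially, applies a precise counting estimate (Lemma~\ref{lem:daniel}) to dyadic blocks rather than bounding each $I_\gamma$ separately. Your pointwise bound on $|I_\gamma|$ is correct provided the two terms are understood as applying in complementary regimes, according to whether the $f$-decay cutoff $\sim\delta_{c,d}^{-2}$ or the turning-point cutoff $(c_\gamma^2 v)^{-1}$ is smaller.

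The genuine gap is in the summation under (B2). Replacing $\delta_{c,d}^{1-2\beta}$ by the worst-case Diophantine bound $|(c,d)|^{\kappa(2\beta-1)}$ discards two crucial facts: that $\delta_{c,d}\le AR^{-1/2}$ is forced for $I_\gamma$ to be non-negligible, and, more importantly, that pairs with $\delta_{c,d}$ near its Diophantine minimum are rare. With $|c_\gamma|\asymp|(c,d)|\sim 2^j$ (say $M=1$), summing $v^{1/2}2^{-j}2^{j\kappa(2\beta-1)}$ over the $\asymp 2^{2j}$ pairs at each dyadic scale $2^j\le(Rv)^{-1/2}$ gives $v^{1/2}\sum_j 2^{j(1+\kappa(2\beta-1))}\asymp v^{-\kappa(2\beta-1)/2}R^{-(1+\kappa(2\beta-1))/2}$, which blows up as $v\to 0$ whenever $\beta>\tfrac12$; keeping the restriction $\delta_{c,d}\le AR^{-1/2}$ only multiplies the count per scale by $R^{-1/\kappa}$ and leaves the same $v$-divergence. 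The correct bookkeeping requires a second dyadic decomposition in $\delta_{c,d}\sim 2^{-k}$ together with the fact that the number of admissible $(c,d)$ at scale $(j,k)$ is $\ll\max(1,2^{2j-2k/\kappa})$, by the separation of the values $cx+dy+m$ under the Diophantine hypothesis; this is exactly the content of the paper's Lemma~\ref{lem:daniel}, which your proposal invokes only vaguely as a ``weighted lattice count''. A secondary omission: for general $M$ the lattice $\Z^2 M$ may contain vectors with $c_\gamma$ arbitrarily small relative to $|(c,d)|$, making $|c_\gamma|^{-1}$ uncontrollable over the annulus; the paper devotes a separate argument (Cases A, B, C, conjugating by $\left(\begin{smallmatrix}0&1\\-1&0\end{smallmatrix}\right)$ and splitting $h$) to this, while your proposal is silent on it. Finally, the paper's proof of (B1) via the theory of Eisenstein series is cleaner than the strip-counting you sketch and avoids these issues entirely.
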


The proof is organized as follows.
\begin{enumerate}
\item Lemma \ref{lem:daniel};
\item Proof under (B2),  $M=1$;
\item Proof under (B2), $M$ arbitrary;
\item Proof under (B1).
\end{enumerate}

Throughout the proof of Proposition \ref{prop:ilya} we assume without loss of generality that $f$ and $h$ are nonnegative,  $\forall r \ge 1, \forall x \in \R, \, f(rx) \le f(x),$ and that $f$ is even. 

We will need the following

\begin{lemma}\label{lem:daniel} Let $(x,y) \in \mathbb{R}^2$ be Diophantine of type $\kappa$.
Let $f \colon \mathbb{R} \to \mathbb{R}_{\ge 0}$ be continuous and rapidly decreasing.
For every $A > 1, D > 0, T > 1$ and $0 < \epsilon < \frac 1{\kappa}$ we have 
\beq \sum_{D \le c \le 2D} \sum_{1 \le d \le D} \sum_{m \in \mathbb{Z}} f(T(c x + d y + m)) \ll \begin{cases} T^{-A} & \text{ if $D \le T^\epsilon$} \\ 1 & \text{ if $T^\epsilon \le D \le T^{\frac 1{\kappa}}$} \\ \left( \frac D{T^{1/\kappa}} \right)^2 & \text{ otherwise.}  \end{cases} \eeq
\end{lemma}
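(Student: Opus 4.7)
Plan: Fix $(c,d)$ with $D \le c \le 2D$ and $1 \le d \le D$ (integers), and let $m_0 = m_0(c,d) \in \Z$ be the integer nearest to $-(cx+dy)$. Setting $\delta(c,d) := |cx+dy+m_0| \in [0,1/2]$, for any $m \ne m_0$ one has $|cx+dy+m| \ge |m-m_0| - 1/2 \ge |m-m_0|/2$, so the rapid decay of $f$ yields
\[
\sum_{m \ne m_0} f(T(cx+dy+m)) \ll_{A'} T^{-A'}
\]
for any $A' > 0$. Summing over the at most $3D^2$ pairs $(c,d)$ contributes $O(D^2 T^{-A'})$, which is absorbed into the bound claimed in each case by choosing $A'$ large enough (e.g.\ $A' = A+2$ in case 1, and any $A' \ge 2/\kappa$ in cases 2 and 3, automatic since $\kappa \ge 2$). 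It remains to estimate $S := \sum_{(c,d)} f(T\delta(c,d))$. Recall the Diophantine hypothesis: $|r_1 x + r_2 y + m'| \ge C(|r_1|+|r_2|)^{-\kappa}$ for every $(r_1,r_2) \in \Z^2 \setminus \{\vecnull\}$ and $m' \in \Z$. In particular, $\delta(c,d) \ge C(3D)^{-\kappa}$ for every pair in our range.

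Case 1 ($D \le T^\epsilon$). From $T\delta(c,d) \ge CT(3D)^{-\kappa} \ge C'T^{1-\epsilon\kappa}$, a positive power of $T$ since $\epsilon\kappa < 1$, rapid decay gives $f(T\delta(c,d)) \ll_B T^{-B(1-\epsilon\kappa)}$ for any $B$. Summing over $O(D^2) \le O(T^{2\epsilon})$ pairs yields $S \ll T^{2\epsilon - B(1-\epsilon\kappa)} \le T^{-A}$ for any prescribed $A$, by taking $B$ large.

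Cases 2 and 3. I partition $[D,2D] \times [1,D]$ into subboxes of side $L := \min(D, \lfloor T^{1/\kappa} \rfloor)$, producing $O(\max(1, D/T^{1/\kappa})^2)$ subboxes. Within a single subbox, any two distinct pairs $(c_1,d_1) \ne (c_2,d_2)$ satisfy $|c_1 - c_2| + |d_1 - d_2| \le 2L$, so applying the Diophantine bound to $(r_1,r_2) = (c_1-c_2, d_1-d_2)$ gives
\[
\inf_{m' \in \Z} |(c_1 - c_2)x + (d_1 - d_2)y + m'| \ge C(2L)^{-\kappa} \ge C_1 T^{-1},
\]
using $L \le T^{1/\kappa}$. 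Hence the fractional parts $\{cx+dy\}$ indexed by $(c,d)$ in one subbox form $L^2$ points on the circle $\R/\Z$ that are pairwise separated by at least $C_1 T^{-1}$. Enumerate them by distance from $0$ as $\delta^{(1)} \le \delta^{(2)} \le \cdots$; since any arc of length $2\delta^{(n)}$ centered at $0$ contains those $n$ closest points but can hold at most $1 + 2\delta^{(n)} T/C_1$ well-separated ones, we conclude $\delta^{(n)} \ge (n-1)C_1/(2T)$. Applying the bound $f(s) \ll (1+|s|)^{-2}$ yields
\[
\sum_{(c,d) \in \text{subbox}} f(T\delta(c,d)) \ll \sum_{n \ge 1}\bigl(1 + (n-1)C_1/2\bigr)^{-2} \ll 1.
\]
Multiplying by the number of subboxes produces $S \ll 1$ when $D \le T^{1/\kappa}$ (case 2; a single subbox of side $L = D$) and $S \ll (D/T^{1/\kappa})^2$ when $D > T^{1/\kappa}$ (case 3; $L = \lfloor T^{1/\kappa}\rfloor$).

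The main obstacle is identifying the correct subbox scale $L \asymp T^{1/\kappa}$: this is the unique scale at which the Diophantine separation $(2L)^{-\kappa}$ within a subbox matches the target resolution $T^{-1}$, so that each subbox contributes $O(1)$ without accruing a logarithmic loss. A smaller $L$ would inflate the subbox count, while a larger $L$ would weaken the intra-subbox separation and force the per-subbox sum to grow. Minor edge cases (non-integer $T^{1/\kappa}$, incomplete tiling, $D < 1$, or very large $D$) are absorbed into implicit constants.
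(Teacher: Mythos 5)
Your proof is correct and follows essentially the same strategy as the paper: the uniform Diophantine lower bound $\delta(c,d)\gg D^{-\kappa}$ for the small-$D$ range, and for the other ranges a tiling of the $(c,d)$-box into blocks of side $\asymp T^{1/\kappa}$ so that within each block the fractional parts of $cx+dy$ are pairwise $\gg 1/T$-separated, whence each block contributes $O(1)$ by the decay $f(s)\ll(1+|s|)^{-2}$. The only cosmetic differences are your explicit split of the $m$-sum into $m=m_0$ and $m\ne m_0$, and your ordering of the block points by distance from $0$ in place of the paper's ``each interval of length $1/T$ holds $O(1)$ points'' phrasing; both implement the same counting idea.
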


\begin{proof}
For every $D \le c \le 2D$, every $1 \le d \le D$, and every $m \in \mathbb{Z}$,  we have
\beq|c x+d y+m| \ge \frac C{(c+d)^{\kappa}} \gg \frac C{D^{\kappa}}.\eeq
Combined with the rapid decay of $f$ (in particular, $\forall B > 1, f(t) \ll \frac 1{t^B}$) this gives
\beq\sum_{D \le c \le 2D} \sum_{1 \le d \le D} \sum_{m \in \mathbb{Z}} f(T(c x + d y + m)) \ll D^2 \left( \frac {D^\kappa}T \right)^B\eeq
for every $B > 1$.  When $D \le T^\epsilon$, we obtain $D^2 \left( \frac {D^\kappa}T \right)^B \ll T^{-A}$ for every $A > 1$, hence the first bound.

Now let us divide the sums over $c$ and $d$ into blocks  
\beq\sum_{0 \le c \le T^{1/\kappa}} \sum_{0 \le d \le T^{1/\kappa}} \sum_{m \in \mathbb{Z}} f(T((b+c) x + (b'+d) y + m)).\eeq
The number of such blocks is $\ll \left(\frac D{T^{1/\kappa}}+1 \right)^2.$ 
The distance between any two points from the same block is  $|c' x + d' y + m'|$ for some $(c',d')\neq(0,0)$ with $|c'| \le T^{1/\kappa}$, $|d'| \le T^{1/\kappa}$ and $m' \in \mathbb{Z}$. By our assumption on $(x, y)$ this distance is at least $\frac C{(|c'|+|d'|)^{\kappa}} \ge \frac {C'} T.$ 
Thus, every interval of $\mathbb{R}$ of length $\frac 1T$ contains at most $\ll 1 + \frac 1{C'}$ points from a given block. 
The rapid decay of $f$ (or indeed the fact that  $f(t) \ll \frac 1{t^2}$) gives 
\beq\sum_{0 \le c \le T^{1/\kappa}} \sum_{0 \le d \le T^{1/\kappa}} \sum_{m \in \mathbb{Z}} f(T((b+c) x + (b'+d) y + m)) \ll 1\eeq
for each block. Estimating the sum over the blocks trivially yields the remaining bounds in the statement. 
\end{proof}

\begin{proof}[Proof of Proposition \ref{prop:ilya} under (B2), $M=1$.]
Writing $\vecxi=(y,-x)$ we have 
\begin{align}\label{eq:firstterm}
F_{R,\beta}(\tau; \vecxi)&=
2\sum_{m\in\Z}f\left((m+x) \frac{v^{1/2}}{|\tau|}\right)\frac{v^\beta}{|\tau|^{2\beta}}\chi_R\left(\frac{v}{|\tau|^2}\right)+\\
&+2\sum_{\substack{{(c,d)\in\Z^2}\\{\gcd(c,d)=1}\\{c>0,d\ne 0}}}\sum_{m\in\Z}f\left((cx+dy+m)\frac{v^{1/2}}{|c\tau+d|}\right)
\frac{v^{\beta}}{|c\tau+d|^{2\beta}}\chi_R\left(\frac v{|c\tau+d|^2}\right)\label{eq:secondterm}
\end{align}
for $v<R$.
The integral of the first term tends to zero as $v\to0$: after the change of variables $u=vt$ we get 
\begin{align*}
\int_\R \eqref{eq:firstterm} h(u)\, du &\ll v\int_{t\in\R} \sum_m f\left(\frac{m+x}{v^{1/2}(t^2+1)^{1/2}}\right)\left(\frac1{v(t^2+1)}\right)^\beta \chi_R\left(\frac1{v(t^2+1)}\right)h(vt)dt\\
&\ll v\int_{(vR)^{-1/2}>|t|}\sum_m \left(\frac{m+x}{v^{1/2}(t^2+1)^{1/2}}\right)^{-2B}\left(\frac1{v(t^2+1)}\right)^\beta dt\\
&\ll v^{B-\beta+1}\int_{(vR)^{-1/2}>|t|}(t^2+1)^{B-\beta}dt\\
&\ll v^{1/2} R^{\beta-B-1/2}\to 0
\end{align*}
as $v\to 0$ for every $R$. Here we used that $f(z)\ll z^{-2B}$ for some $B>\frac12$ and the fact that $x$ is not an integer because of the Diophantine condition. 

It remains to analyze the contribution of \eqref{eq:secondterm}. After performing the substitution $t=\left(u+d/c\right)v^{-1}$ we get 
\beq\int_{t=-\infty}^\infty f\left((cx+dy+m)\frac1{\sqrt{c^2 v (t^2+1)}}\right)\frac v{(c^2v(t^2+1))^\beta}\chi_R\left(\frac1{c^2v(t^2+1)}\right) h(vt -d/c)dt.\eeq
From restrictions coming from $\chi_R$ and $h$ we get that $|d|\ll c$, and the implied constant depends only on the support of $h$. 

We need to bound 
\beq\sum_{c=1}^\infty\int_{t\in\R}\left[\sum_{0<|d|\ll c}\sum_{m\in\Z} f\left((cx+dy+m)\frac1{\sqrt{c^2 v (t^2+1)}}\right) \right]\frac  v{(c^2v(t^2+1))^\beta}\chi_R\left(\frac1{c^2v(t^2+1)}\right)\! dt.\label{eq:tobound}\eeq

Now we decompose the region $\dfrac 1{\sqrt{c^2v(t^2+1)}} \ge \sqrt{R}$ into dyadic regions \[2^j \le \dfrac 1{\sqrt{c^2 v (t^2+1)}} < 2^{j+1}\] for $j \gg \log R$.
We can thus bound expression \eqref{eq:tobound} by 
\begin{multline*}\sum_{j \gg \log R} \sum_{c \ge 1} \int_\R \left[\sum_{0<|d|\ll c}\sum_{m\in\Z} f\left((cx+dy+m)\frac1{\sqrt{c^2 v (t^2+1)}}\right) \right]\times \\\times \frac  v{(c^2v(t^2+1))^\beta}\chi_{[2^j, 2^{j+1})} \left(\frac1{\sqrt{c^2v(t^2+1)}}\right)\! dt \end{multline*}
\begin{align} 
\notag &\le\! \sum_{j \gg \log R} \sum_{c \ge 1} \int_\R \left(\sum_{0<|d|\ll c}\sum_{m\in\Z} f(2^j(cx+dy+m)) \right)\!\frac  v{(c^2v(t^2+1))^\beta}\chi_{[2^j, 2^{j+1})} \!\left(\frac1{\sqrt{c^2v(t^2+1)}}\right)\! dt \\
\label{eq:doublesum}&\ll v \sum_{j \gg \log R} 2^{2 \beta j} \int_\R \sum_{0<|d| \ll \frac {2^{-j}}{\sqrt{v(t^2+1)}}} \left(\sum_{\frac {2^{-(j+1)}}{\sqrt{v(t^2+1)}} \le c \le \frac {2^{-j}}{\sqrt{v(t^2+1)}}} \sum_{m\in\Z} f(2^j(cx+dy+m)) \right) dt \end{align}
To bound this we apply Lemma \ref{lem:daniel} with $D \sim \frac {2^{-(j+1)}}{\sqrt{v(t^2+1)}}$ and $T = 2^j$. 
Note that the domain of integration is always restricted to $t^2+1 \le 1/v$ since $2^j c \ge 1$.

In the first range we have the bound
\beq
 v\sum_{j\ge 0}2^{2\beta j}\int_{t^2+1\le 1/v} 2^{-jA}dt\ll v^{1/2}\to 0.
\eeq
For the second range we restrict the domain of integration to $D\ge T^\eps$. For $\rho\in (0,2)$ we have the bound 
\begin{align}\label{four seventeen}
\notag v\sum_{j\ge 0} 2^{2\beta j}\int_{D\ge T^\eps} dt&\ll v\sum_{j\ge 0}2^{2\beta j}\int_{D\ge T^\eps}\left(\frac D{T^\eps}\right)^{2-\rho}dt\\
 &\ll v\sum_{j\ge0} 2^{2\beta j}\int_{t\in\R}\frac{2^{-(j+1)(2-\rho)}}{2^{(2-\rho)j\eps}(v(t^2+1))^{\frac{2-\rho}{2}}}dt\\
\notag &\ll v^{\rho/2}\sum_{j\ge 0} 2^{j(2\beta-(2-\rho)-(2-\rho)\eps)}.
\end{align}
The sum over $j$ converges whenever $\beta<1+\eps-\frac\rho2(1+\eps)$. It is clear that for every $\beta<1+\frac1\kappa$ we can find $\eps<\frac1\kappa$ and $\rho\in(0,2)$ so that this condition is satisfied. Then the contribution of the second range is $v^{\rho/2}\to0$, as needed. 

Write $\delta=\frac1\kappa$. Then, in the third range we have 
\begin{align}\label{eq:third}
\notag v\sum_{j\gg \log R}2^{2\beta j}\int_{t\in \R}D^2T^{-2\delta} dt &\ll v\sum_{j\gg \log R} 2^{2\beta j}\int_{t\in\R}\left(\frac{2^{-j}}{\sqrt{v(t^2+1)}}\right)^2 2^{-2\delta j}dt\\
 &\ll  \sum_{j\gg \log R} 2^{2j(\beta - 1 -\delta)}\to 0
\end{align}
as $R\to \infty$ since $\beta<1+\delta$.

\end{proof}

\begin{proof}[Proof of Proposition \ref{prop:ilya} under (B2), and $M$ arbitrary]
We need to show that 
\beq\label{eq:fullexpression}\adjustlimits\lim_{R\to \infty}\limsup_{v\to 0} \bigg| \int_{u\in\R} F_{R,\beta}\left((1,\vecxi)Mn(u)a(v)\right) h(u)du \bigg|=0\eeq
for a fixed $M\in G$.
Let $M=\left(\begin{smallmatrix}a&b\\c&d\end{smallmatrix}\right)$. Then we can find $\tilde u$, $\tilde v$ and $\theta$ so that 
\[\begin{pmatrix}a&b\\c&d\end{pmatrix}\begin{pmatrix}1&u\\&1\end{pmatrix}\begin{pmatrix}v^{1/2}\\&v^{-1/2}\end{pmatrix}=\begin{pmatrix}1&\tilde u\\&1\end{pmatrix} 
\begin{pmatrix}\tilde v^{1/2}\\&\tilde v^{-1/2}\end{pmatrix}k(\theta).\]
It is well known that 
\beq\label{eq:decomposition}\tilde u+\mathrm{i}\tilde v=\frac{a(u+\mathrm{i}v)+b}{c(u+\mathrm{i}v )+d}=\frac{(au+b)(cu+d)+acv^2}{(cu+d)^2+(cv)^2}+\mathrm{i}\frac{v}{(cu+d)^2+(cv)^2}.\eeq

\textbf{Case A.} If $cu+d$ never vanishes (or equivalently if $-d/c$ is not in the support of $h$), we bound the integral in the statement by a change of variable to $\tilde u$. The Jacobian 
\[j(\tilde u)=\left|\frac{du}{d\tilde u}\right|\]
is bounded away from zero and infinity when $v$ is small enough. Therefore the original integral is equal to 
\beq \label{eq:integral}
\int_{\tilde u\in\R} F_{R,\beta}\left((1,\vecxi)n(\tilde u)a(\tilde v)\right) h(u)j(\tilde u)d\tilde u. 
\eeq
Since $h$ has compact support, let $\supp h\subset [-H,H]$ for some $H>0$. Then, $|\tilde u|$ is at most 
\[\frac{(|a|H+|b|)(|c|H+|d|)+|ac|}{(|c|H-|d|)^2}<\infty\]
for $v\le 1$, and  hence we can find a nonnegative $\tilde h\in C_0(\R)$ such that $\tilde h(\tilde u)\ge h(u) j(\tilde u)$. The integral \eqref{eq:integral} is at most 
\beq\label{eq:integral1}\int_{\tilde u\in\R} F_{R,\beta}\left((1,\vecxi)n(\tilde u)a(\tilde v)\right) \tilde h(\tilde u)d\tilde u. \eeq
Now observe that
\[\tilde v\in \left[\frac{v}{(|c|H-|d|)^2+c^2},\frac{v}{(|c|H-|d|)^2}\right]=I(v).\]
Therefore we have 
\begin{align*}\eqref{eq:integral1}
&\le \sup_{u'\in\supp\tilde h}\int_{\tilde u\in\R} F_{R,\beta}\left((1,\vecxi)n(\tilde u)a(\tilde v(u'))\right) \tilde h(\tilde u)d\tilde u.\\
&\le \sup_{v'\in I(v)}\int_{\tilde u\in\R} F_{R,\beta}\left((1,\vecxi)n(\tilde u)a(v')\right) \tilde h(\tilde u)d\tilde u.
\end{align*}
We then apply Proposition \ref{prop:ilya} with $h$, $u$, and $v$ replace by $\tilde h$, $\tilde u$, and $ v'$, respectively.

\textbf{Case B.} So suppose that $-d/c$ is in the support of $h$. Then we ``flip'' the problem as follows. Let 
\[J=\begin{pmatrix}0&1\\-1&0\end{pmatrix}\in\Gamma,\]
and consider $(J,\vecnull)\in\Gamma'$. Since $F_{R,\beta}$ is left-$\Gamma'$-invariant, we have
\beq
F_{R,\beta}\left((1,\vecxi)Mn(u)a(v)\right)=F_{R,\beta}\left((J,\vecnull)(1,\vecxi)Mn(u)a(v)\right)=F_{R,\beta}\left((1,-\vecxi J)JMn(u)a(v)\right).
\eeq
This effectively switches $\xi_1$ and $\xi_2$, so that any Diophantine condition from the assumptions will be preserved. Now 
\[JM=\begin{pmatrix}
      c&d\\
      -a&-b
     \end{pmatrix}.\]
Repeating the decomposition from \eqref{eq:decomposition} with $JM$ in place of $M$ yields the condition that $au+b$ should never vanish (or equivalently that $-b/a$ is not in the support of $h$). If $au+b\ne 0$ for all $u$ in the support of $h$, then we are done since we can use $\tilde u$, $ v'$, and $\tilde h$ as before.

\textbf{Case C.} Suppose that both $-d/c$ and $-b/a$ are in the support of $h$. They must be distinct as $ad-bc=1$; so we write $h=h_1+h_2$ with $h_1\in C_0(\R)$ not supported in a neighborhood of $-b/a$ and $h_2\in C_0(\R)$ not supported in a neighborhood of $-d/c$. Then we apply the above arguments to $h_1$ and $h_2$ separately, and these functions will fall under cases B and A, respectively. 
\end{proof}

\begin{proof}[Proof of Proposition \ref{prop:ilya} under (B1)]
Let $M=1$; the case of general $M$ can be treated as under (B2). Since $f$ is rapidly decaying and $R\ge 1$, we have
\begin{equation}
F_{R,\beta}\left(\tau; \vecxi\right) \ll_f \overline F_{R,\beta}\left(\tau\right)
\end{equation}
where
\begin{equation}
\overline F_{R,\beta}\left(\tau\right)=\sum_{\gamma\in\Gamma_\infty\quot \Gamma} v^\beta_\gamma\chi_R(v_\gamma).
\end{equation}
Thus
\begin{equation}
\int_{u\in\R} F_{R,\beta}\left(u+\mathrm{i}v;\vecxi \right) h(u)du \ll_{f,h}
\int_0^1 \overline F_{R,\beta}\left(u+\mathrm{i}v\right) du .
\end{equation}
The evaluation of the integral on the right hand side is well known from the theory of Eisenstein series.
We have
\begin{equation}
\overline F_{R,\beta}\left(\tau\right)=v^\beta \chi_R(v) +
2 \sum_{c=1}^\infty  \sum_{\substack{d=1 \\ \gcd(c,d)=1}}^{c-1} \sum_{m\in\Z} \frac{v^\beta}{c^{2\beta} |\tau+\frac{d}{c}+m|^{2\beta}} \chi_R\left(\frac{v}{c^2 |\tau+\frac{d}{c}+m|^2}\right).
\end{equation}
This function is evidently periodic in $u=\re\tau$ with period one, and its zeroth Fourier coefficient is
(we denote by $\varphi$ Euler's totient function)
\begin{align}
\notag \int_0^1  \overline F_{R,\beta}\left(u+\mathrm{i}v\right) du   & = 
v^\beta \chi_R(v) +
2 \sum_{c=1}^\infty  \sum_{\substack{d=1 \\ \gcd(c,d)=1}}^{c-1} \frac{1}{c^{2\beta}} \int_\R \frac{v^\beta}{|u+\mathrm{i}v|^{2\beta}} \chi_R\left(\frac{v}{c^2 |u+\mathrm{i}v|^2}\right)  du\\
 & =
v^\beta \chi_R(v) +
2 v^{1-\beta} \sum_{c=1}^\infty  \frac{\varphi(c)}{c^{2\beta}} \int_\R \frac{1}{(t^2+1)^\beta} \chi_R\left(\frac{1}{vc^2 (t^2+1)}\right)  dt.
\end{align}
The first term vanishes for $v<R$, and the second term is bounded from above by 
\begin{equation}
2 v^{1-\beta} \sum_{c=1}^\infty  \frac{1}{c^{2\beta-1}} \int_\R \frac{1}{(t^2+1)^\beta} \chi_R\left(\frac{1}{vc^2 (t^2+1)}\right) \, dt = 2 v^{1/2} \sum_{c=1}^\infty K_R(cv^{1/2})
\end{equation}
with the function $K_R:\R_{>0}\to \R_{\ge 0}$ defined by
\begin{equation}
K_R(x)=  \frac{1}{x^{2\beta-1}} \int_\R \frac{1}{(t^2+1)^\beta} \chi_R\left(\frac{1}{x^2 (t^2+1)}\right) \, dt .
\end{equation}
We have $K_R(x)\ll \max\{ 1, x^{2\beta-1}\} \le \max\{ 1, x^{-1/2}\}$ and furthermore $K_R(x)=0$ if $x>R^{-1/2}$. Thus
\begin{equation}
\lim_{v\to 0} v^{1/2} \sum_{c=1}^\infty K_R(cv^{1/2}) = \int_\R K_R(x) dx,
\end{equation}
which evaluates to a constant times $R^{-(1-\beta)}$. 
\end{proof}

\begin{prop}\label{fortdep} Fix  $A, B>1$, $M\in G$, and let $\fK= [\frac1A,A]\times [-B,B]$.
Under the assumptions of Proposition \ref{prop:ilya}, we have
\beq \label{uniconv}\adjustlimits \lim_{R \to \infty} \limsup_{v \to 0} \sup_{(a,b)\in\fK} 
\bigg|\int_\R F_{R,\beta} \left( (1,\vecxi) Mn(u) \begin{pmatrix} a & 0 \\ 0 & a^{-1} \end{pmatrix} \begin{pmatrix} 1 & 0 \\ b & 1 \end{pmatrix} a(v)\right) h(u) du \bigg|= 0.
\eeq
\end{prop}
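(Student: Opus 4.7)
The plan is to reduce \eqref{uniconv} to Proposition~\ref{prop:ilya} by absorbing the block $\begin{pmatrix}a&0\\0&a^{-1}\end{pmatrix}\begin{pmatrix}1&0\\b&1\end{pmatrix}a(v)$ into a single Iwasawa element $n(\tilde u)\,a(\tilde v)\,k(\theta)$ and then exploiting the fact that $F_{R,\beta}$, being a function of $(\tau,\vecxi)$ only (see \eqref{ida}), is right-$K$-invariant.

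First I would compute the Iwasawa parameters by acting on $\mathrm{i}\in\H$. A direct calculation gives
\[
n(u)\begin{pmatrix}a&0\\0&a^{-1}\end{pmatrix}\begin{pmatrix}1&0\\b&1\end{pmatrix}a(v)\cdot\mathrm{i}
= u+\frac{a^{2}bv^{2}}{1+b^{2}v^{2}}+\mathrm{i}\,\frac{a^{2}v}{1+b^{2}v^{2}},
\]
so that $\tilde u=u+c(a,b,v)$ with $c(a,b,v):=\frac{a^{2}bv^{2}}{1+b^{2}v^{2}}$ and $\tilde v=\frac{a^{2}v}{1+b^{2}v^{2}}$. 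Two features of these formulas are crucial: the shift satisfies $|c(a,b,v)|\le A^{2}Bv^{2}\to 0$ uniformly on $\fK$, and $\tilde v\asymp v$ uniformly on $\fK$ once $v$ is small, so in particular $\tilde v\to 0$ uniformly on $\fK$ as $v\to 0$.

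Next I would perform the substitution $u\mapsto\tilde u$ (Jacobian one) and invoke the right-$K$-invariance of $F_{R,\beta}$, turning the integrand into $F_{R,\beta}((1,\vecxi)Mn(\tilde u)a(\tilde v))\,h(\tilde u-c(a,b,v))$. Assuming without loss of generality that $h\ge 0$ (exactly as in the proof of Proposition~\ref{prop:ilya}), the uniform smallness of $c(a,b,v)$ lets me dominate $h(\,\cdot\,-c(a,b,v))$ by a single nonnegative $\tilde h\in C_0(\R)$ valid for all $(a,b)\in\fK$ and all sufficiently small $v$; for example $\tilde h(u):=\sup_{|s|\le 1}h(u-s)$ works. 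Setting
\[
J_R(v'):=\int_{\R}F_{R,\beta}\bigl((1,\vecxi)Mn(u')a(v')\bigr)\,\tilde h(u')\,du',
\]
this yields the bound $|\text{LHS of \eqref{uniconv}}|\le J_R(\tilde v)$ uniformly in $(a,b)\in\fK$.

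The proof then concludes by a standard limsup manipulation: since $\tilde v\to 0$ uniformly on $\fK$, for every $\epsilon>0$ and all small enough $v$ one has $\sup_{(a,b)\in\fK}J_R(\tilde v)\le\sup_{0<v'<\epsilon}J_R(v')$, whence $\limsup_{v\to 0}\sup_{\fK}|\text{LHS}|\le\limsup_{v'\to 0}J_R(v')$, and the right-hand side vanishes as $R\to\infty$ by Proposition~\ref{prop:ilya} applied with $\tilde h$ in place of $h$. No new analytic input beyond Proposition~\ref{prop:ilya} is required; the only place that demands real care is the uniform bookkeeping of the Iwasawa parameters over the compact set $\fK$, since that is precisely what allows a single majorant $\tilde h$ to be chosen and makes the reduction to a single-$M$, single-$h$ statement legitimate.
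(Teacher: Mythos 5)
Your proof is correct and follows essentially the same route as the paper's: pass to Iwasawa coordinates (same formulas for $\tilde u,\tilde v$), note that $\tilde v\asymp v$ and the shift in $\tilde u$ vanishes uniformly on $\fK$ as $v\to0$, absorb everything into a single compactly supported majorant, and invoke Proposition~\ref{prop:ilya}. The only difference is cosmetic: you make explicit the choice of a single majorant $\tilde h(u)=\sup_{|s|\le 1}h(u-s)$ and the limsup bookkeeping needed to pass from the $(a,b,v)$-dependent $\tilde v$ to a genuine $\limsup_{v'\to0}$, points the paper leaves implicit.
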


\begin{proof}
To use Proposition \ref{prop:ilya} we pass to Iwasawa coordinates, in particular we find $\tilde{u}$ and $\tilde{v}$ such that \beq \begin{pmatrix} 1 & u \\ 0 & 1 \end{pmatrix} \begin{pmatrix} a & 0 \\ 0 & a^{-1} \end{pmatrix} \begin{pmatrix} 1 & 0 \\ b & 1 \end{pmatrix} \begin{pmatrix} v^{1/2} & 0 \\ 0 & v^{-1/2} \end{pmatrix} = \begin{pmatrix} 1 & \tilde{u} \\ 0 & 1 \end{pmatrix} \begin{pmatrix} \tilde{v}^{1/2} & 0 \\ 0 & \tilde{v}^{-1/2} \end{pmatrix} k(\theta) \eeq for some $\theta \in [0, 2 \pi].$
A quick calculation yields 
\[\tilde{u} = u + \frac {ba^2v^2}{1+(bv)^2},\text{ and }\tilde{v} = \frac {a^2v}{1+(bv)^2}.\] 
Thus, the integral in \eqref{uniconv} is  \beq \label{newfunctions} \int_\R {F_{R,\beta}} \left( (1, \vecxi) M\begin{pmatrix} 1 & \tilde{u} \\ 0 & 1 \end{pmatrix} \begin{pmatrix} \tilde{v}^{1/2} & 0 \\ 0 & \tilde{v}^{-1/2} \end{pmatrix} \right) \tilde{h}(\tilde{u}) d \tilde{u} \eeq
where $\tilde{h}$ is nonnegative and compactly supported.
Observe that $\frac{v}{A^2(1+B^2)}\le \tilde{v} \le A^2 v$ as $v$ goes to $0$, hence identifying $n(\tilde u) a(\tilde v)$ with $\tilde{u}+\mathrm{i}\tilde{v}$ allows us to apply Proposition \ref{prop:ilya} with  $h := \tilde{h},$ which ensures that \beq \label{clfornewfns} \adjustlimits\lim_{R \to \infty} \limsup_{\tilde{v} \to 0} |\eqref{newfunctions}| = 0. \eeq This gives the desired result since the left-hand side of \eqref{uniconv} is  that of \eqref{clfornewfns}.
\end{proof}

\begin{prop}\label{spherical77} 
Let $\lambda$ be a Borel probability measure on $\T$ with continuous density, $\vecxi\in\R^2$ and $\beta\ge 0$ so that (B1) or (B2) holds. Then
\beq \label{uniconv2} \lim_{R \to \infty} \limsup_{t \to \infty} \bigg|\int_\T F_{R,\beta} \left( (1,\vecxi) Mk(2\pi\alpha) \Phi^t\right) \lambda(d\alpha) \bigg|= 0.
\eeq
\end{prop}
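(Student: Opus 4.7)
The plan is to reduce the spherical integral in Proposition \ref{spherical77} to the horocycle integral treated by Proposition \ref{fortdep}, by exploiting the $NAN^{-}$ (Bruhat) decomposition of $k(\phi)\Phi^t$ and changing variable $\alpha \to u = -\tan(2\pi\alpha)$.

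First I would split $\T$ into four arcs, two containing neighborhoods of $\alpha \in \{0, \tfrac12\}$ (where $\cos(2\pi\alpha) \neq 0$) and two containing neighborhoods of $\alpha \in \{\tfrac14, \tfrac34\}$ (where $\sin(2\pi\alpha) \neq 0$). On the second type of arcs, the identity $k(\phi + \pi/2) = k(\pi/2)\,k(\phi)$ allows a shift of $\alpha$ by $\tfrac14$ and absorption of $k(\pi/2)$ into $M$; since Propositions \ref{prop:ilya} and \ref{fortdep} hold for arbitrary $M \in G$, this reduces to the first case. We may therefore assume $\alpha \in [-\alpha_0, \alpha_0]$ for some $\alpha_0 < \tfrac14$. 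A direct matrix computation then yields
\begin{equation*}
k(\phi)\Phi^t = n(u)\,\tilde A(a,b)\,a(v), \qquad u = -\tan\phi, \ a = \sqrt{1+u^2}, \ b = -u, \ v = e^{-t}.
\end{equation*}
Substituting $u = -\tan(2\pi\alpha)$ pushes $\lambda(d\alpha)$ forward to $h(u)\, du$ with $h \in C_0(\R)$ supported in $[-U, U]$, $U = \tan(2\pi\alpha_0)$. The task thus reduces to proving $\adjustlimits\lim_{R \to \infty}\limsup_{t \to \infty}|I(t)| = 0$, where
\begin{equation*}
I(t) := \int_{-U}^U F_{R,\beta}\bigl((1,\vecxi)\, M\, n(u)\, \tilde A(\sqrt{1+u^2}, -u)\, a(e^{-t})\bigr)\, h(u)\, du.
\end{equation*}

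The main obstacle is that the parameter $(a(u), b(u)) = (\sqrt{1+u^2}, -u)$ now depends on the integration variable, so Proposition \ref{fortdep}---which fixes $(a,b) \in \fK$---does not apply verbatim. I would circumvent this by rerunning the argument of Proposition \ref{fortdep} with $(a, b)$ varying in $u$. The Iwasawa decomposition $n(u)\tilde A(a(u),b(u))a(v) = n(\tilde u(u))\, a(\tilde v(u))\, k(\theta(u))$ from the proof of Proposition \ref{fortdep} then gives
\begin{equation*}
\tilde u(u) = \frac{u(1 - v^2)}{1 + u^2 v^2}, \qquad \tilde v(u) = \frac{(1+u^2)\,v}{1 + u^2 v^2},
\end{equation*}
so for $v = e^{-t}$ small and $u \in [-U, U]$, the map $u \mapsto \tilde u$ is a diffeomorphism with Jacobian close to $1$, and $\tilde v(u) \in [v, (1+U^2)\,v]$, tending to zero uniformly in $u$.

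After invoking right-$K$-invariance of $F_{R,\beta}$ to drop the factor $k(\theta(u))$ and changing the integration variable from $u$ to $\tilde u$, we obtain
\begin{equation*}
I(t) = \int_\R F_{R,\beta}\bigl((1,\vecxi)\, M\, n(\tilde u)\, a(V(\tilde u, t))\bigr)\, \hat h(\tilde u)\, d\tilde u,
\end{equation*}
with $\hat h \in C_0(\R)$ uniformly bounded and $V(\tilde u, t)$ a continuous function of $\tilde u$ satisfying $v \le V(\tilde u, t) \le (1+U^2)\, v$ on $\supp \hat h$. This coincides with the conclusion of Proposition \ref{prop:ilya} save for the mild $\tilde u$-dependence of the cusp parameter $V$. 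However, the bounds in the proof of Proposition \ref{prop:ilya}---the dyadic decomposition and application of Lemma \ref{lem:daniel} under (B2), and the Eisenstein-series estimate under (B1)---depend only on an upper bound for the cusp parameter, and are therefore uniform over $V(\tilde u, t)$ in the indicated range. Applying those bounds term-by-term in the sum defining $F_{R,\beta}$ then yields $\adjustlimits\lim_{R \to \infty}\limsup_{t \to \infty}|I(t)| = 0$, completing the proof.
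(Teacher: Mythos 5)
Your reduction---splitting $\T$ into arcs, passing to horocycle coordinates via
\[k(\phi)\Phi^t = n(u)\begin{pmatrix}a&0\\0&a^{-1}\end{pmatrix}\begin{pmatrix}1&0\\b&1\end{pmatrix}a(v),\qquad u=-\tan\phi,\ a=\sqrt{1+u^2},\ b=-u,\ v=\e^{-t},\]
and then to Iwasawa coordinates $(\tilde u,\tilde v)$---is the route the paper intends when it cites \cite[Cor.~5.4]{marklof_strombergsson_free_path_length_2010}. Your formulas for $\tilde u(u),\tilde v(u)$, the Jacobian and the bound $v\le V(\tilde u,t)\le(1+U^2)v$, and the observation that Proposition~\ref{fortdep} does not apply verbatim because the supremum over $(a,b)\in\fK$ there lies outside the integral, are all correct.

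The final paragraph, however, rests on a claim that is stated inaccurately and left unverified. It is not true that the estimates in the proof of Proposition~\ref{prop:ilya} ``depend only on an upper bound for the cusp parameter''; the lower bound $V\ge v$ is equally essential. Consider the first-term estimate under (B2). After the substitution $u=vt$, write $V=vW$ with $W(t)\in[1,C]$ and $C=1+U^2$. The argument of $f$ becomes $(m+x)\,W^{1/2}\,v^{-1/2}(t^2+W^2)^{-1/2}$, and it is the lower bound $W\ge 1$ that gives $W^{1/2}(t^2+W^2)^{-1/2}\ge C^{-1/2}(t^2+1)^{-1/2}$; if $W$ could be small this quantity would vanish and the rapid decay of $f$ would yield no useful bound. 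Likewise, the scale $T=2^j$ and block size $D$ fed into Lemma~\ref{lem:daniel} are determined by $W^{1/2}(c^2v(t^2+W^2))^{-1/2}$, and replacing $v$ by $V$ shifts the dyadic indices only by $O(\log C)$---which is harmless precisely because $W$ is confined to the fixed interval $[1,C]$. What is really used, then, is the two-sided comparison $V/v\in[1,C]$, and showing that every estimate in the proof of Proposition~\ref{prop:ilya} degrades by at most a $C$-dependent constant under this replacement is a concrete step-by-step check that should be carried out rather than asserted. With that correction the argument does go through and the proof is sound.
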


\begin{proof} This follows from  Proposition \ref{fortdep} by the same argument as in the proof of \cite[Corollary 5.4]{marklof_strombergsson_free_path_length_2010}. 
\end{proof}

\section{The main lemma\label{sec:main}}

As explained in the introduction, the following key lemma establishes that Theorem \ref{th:main0}
follows from Theorem \ref{th:prelim} under the stated assumptions.

\begin{lemma}
Under the assumptions of Theorem \ref{th:main0}, 
\begin{equation}\label{limit:main4}
\lim_{K\to\infty} \limsup_{T\to\infty} \left|\M_\lambda(T,\vecs)-\M_\lambda^{(K)}(T,\vecs)\right| = 0 .
\end{equation}
\end{lemma}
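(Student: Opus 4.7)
The plan is to reduce the estimate to the uniform boundedness of a slightly higher moment of a single counting function on the space of affine lattices, and then to control that moment via the no-escape-of-mass result of Proposition~\ref{spherical77}. Since $|(x+1)^s|=(x+1)^{\re(s)}\le (x+1)^{\re_+(s)}$ for $x\ge 0$, and $\prod_j (x_j+1)^{\sigma_j}\le (\max_j x_j + 1)^{\sum_j \sigma_j}$ whenever $\sigma_j\ge 0$, with $\sigma:=\sum_j \re_+(s_j)$ one obtains
\[
|\M_\lambda(T,\vecs)-\M_\lambda^{(K)}(T,\vecs)|\le \int_{\max_j\scrN_{c,T}(I_j,\alpha)>K}\bigl(\max_j\scrN_{c,T}(I_j,\alpha)+1\bigr)^\sigma\lambda(d\alpha).
\]
Applying the crude bound \eqref{crude} to the enlarged test set $\fC=\bigcup_j \fC_0(I_j+[-\vartheta,\vartheta])$, this is further dominated by $\int_{N(T,\alpha)>K}(N(T,\alpha)+1)^\sigma\lambda(d\alpha)$, where $N(T,\alpha)=\scrN(g(T,\alpha),\fC)$ and $g(T,\alpha)=(1,\vecxi)M_0 k(2\pi\alpha)\Phi^{t}$ with $t=2\log T$.

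Next I would apply a Markov-style trick to gain a decaying factor in $K$: for any $\eta>0$, on $\{N>K\}$ we have $1 \le ((N+1)/(K+1))^{\eta}$, so
\[
\int_{N>K}(N+1)^\sigma\lambda(d\alpha)\le (K+1)^{-\eta}\int_\T(N+1)^{\sigma+\eta}\lambda(d\alpha).
\]
Under (A1) I pick $\eta>0$ so small that $\sigma+\eta<2$, whence $\beta:=(\sigma+\eta)/2<1$, which is hypothesis (B1) of Proposition~\ref{prop:ilya}; under (A2), the analogous choice produces $\sigma+\eta<2+2/\kappa$ and $\beta<1+1/\kappa$, which is (B2). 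Either way Proposition~\ref{spherical77} is applicable with this $\beta$.

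The key comparison step is to dominate $(N+1)^{\sigma+\eta}$ by a constant plus a multiple of $F_{R,\beta}$. Choose the profile $f$ in the definition of $F_{R,\beta}$ to be an even, nonnegative, rapidly decreasing function with $f(x)\ge\mathbf{1}_{|x|\le r}$, where $r=\sup\{\|\vecx\|\colon \vecx\in\fC\}$. For $g\in\Gamma'\quot G'$, consider its fundamental-domain representative: if the $v$-coordinate of that representative is at most $R$, Lemma~\ref{lem:upper} implies $N(g,\fC)\le C_R$; if it exceeds $R\ge 4r^2$, the same lemma together with the identity-coset contribution to the Poincaré series yields $(N+1)^{\sigma+\eta}\le C\,F_{R,\beta}(g)$. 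In all cases,
\[
(N(g,\fC)+1)^{\sigma+\eta}\le C_R+C\,F_{R,\beta}(g).
\]
Integrating against $\lambda(d\alpha)$ and invoking Proposition~\ref{spherical77}, for $R$ chosen sufficiently large the Poincaré series integral becomes arbitrarily small, so $\limsup_{T\to\infty}\int_\T(N(T,\alpha)+1)^{\sigma+\eta}\lambda(d\alpha)\le B$ for some finite $B$. Combined with the Markov bound this gives $\limsup_{T\to\infty}|\M_\lambda(T,\vecs)-\M_\lambda^{(K)}(T,\vecs)|\le B(K+1)^{-\eta}$, which tends to zero as $K\to\infty$.

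The principal obstacle is the comparison $(N+1)^{\sigma+\eta}\le C_R+C\,F_{R,\beta}(g)$, which requires identifying the cusp growth of $N$ coming from Lemma~\ref{lem:upper} with a single coset term in the Poincaré series $F_{R,\beta}$ and handling the non-cusp range by a crude compactness bound. All the hard analysis is then absorbed into Proposition~\ref{spherical77}, whose Diophantine case (B2)---already proved via the dyadic block estimate of Lemma~\ref{lem:daniel}---is the technical heart of the argument.
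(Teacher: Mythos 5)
Your proof is correct, and it uses the same key ingredients as the paper --- the crude bound \eqref{crude}, Lemma~\ref{lem:upper}, the Poincar\'e-series majorization \eqref{404}, and Proposition~\ref{spherical77} --- but it structures the tail estimate differently. The paper exploits the freedom to choose $R$ depending on $K$: by Lemma~\ref{lem:upper}, the cut-off $\scrN(g,\fC)\ge K$ forces the fundamental-domain representative of $g$ to satisfy $v\gg K^2$, so the tail integral is dominated by $\int_\T F_{R(K),\beta}\,d\lambda$ with $\beta=\sigma/2$ and $R(K)\to\infty$ together with $K$; one then takes $\limsup_T$ followed by $K\to\infty$ and invokes Proposition~\ref{spherical77} once. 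You instead keep $R$ fixed, apply the Markov-type inequality on $\{\scrN>K\}$ to extract a factor $(K+1)^{-\eta}$, and pay for it by bounding a slightly higher moment $\int_\T(\scrN+1)^{\sigma+\eta}d\lambda$ uniformly in large $T$. This works because (A1) and (A2) are \emph{strict} inequalities, so there is room to increase $\sigma$ to $\sigma+\eta$ while keeping $\beta=(\sigma+\eta)/2$ inside (B1) or (B2); the paper's route uses the strictness in the same place, just in the form $\beta<1$ or $\beta<1+1/\kappa$. Your version is marginally more work to set up (you need the comparison $(\scrN+1)^{\sigma+\eta}\le C_R+C\,F_{R,\beta}$ split between the compact part $\{v<R\}$ of the fundamental domain and the cusp, whereas the paper only needs the cusp side), but in exchange it yields a quantitative decay rate $O(K^{-\eta})$ for the tail, which the paper's qualitative argument does not state. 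The comparison step you flag as the principal obstacle is handled exactly as you describe: on the compact part Lemma~\ref{lem:upper} gives a bound depending only on $R$, and in the cusp the identity coset of $F_{R,\beta}$ (with $f\ge\mathbf{1}_{[-r,r]}$) dominates the right-hand side of \eqref{eq102}.
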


\begin{proof}
We have
\begin{equation}
\left|\M_\lambda(T,\vecs)-\M_\lambda^{(K)}(T,\vecs)\right|
\le \int_{\scrN_{c,T}(\overline I,\alpha)\ge K} (\scrN_{c,T}(\overline I,\alpha)+1)^{\sigma} \lambda(d\alpha)
\end{equation}
where $\overline I=\cup_j I_j$ and $\sigma=\sum_j \re_+(s_j)$. The statement now follows from Proposition \ref{spherical77} after following a chain of inequalities from eq.~\eqref{crude}, Lemma \ref{lem:upper}, and eq.~\eqref{404}.
\end{proof}

This completes the proof of Theorem \ref{th:main0}.

\section{Singular Diophantine conditions\label{sec:singular}}

The crucial step necessary to extend Theorem \ref{th:main0} to the Diophantine condition stated in Remark \ref{rem:div} is the following lemma. 

\begin{lemma} \label{lem:daniel2} 
Let $(x,y)=\vecn \omega + \vecl$ where $\vecn \in \Z^2 \setminus \{\vecnull \}$, $\omega\in\R$ is Diophantine of type $\frac{\kappa}{2}$, $\vecl \in \Q^2$ and $\det(\vecn, \vecl) \notin \Z$.
Let $f \colon \R \to \R_{\ge 0}$ be continuous and rapidly decreasing.
For every $A > 1, D > 0, T > 1$ and $0 < \epsilon < \frac 2{\kappa}$ we have 
\begin{equation}
\sum_{\substack{D \le c \le 2D \\ 1 \le d \le D \\ \gcd(c,d) = 1 }} \sum_{m \in \mathbb{Z}} f(T(c x + d y + m)) \ll \begin{cases} T^{-A} & \text{ if $D \le T^\epsilon$} \\ D & \text{ if $T^\epsilon \le D \le T^{2/\kappa}$} \\ \frac {D^2}{T^{2/\kappa}} & \text{ otherwise.}  \end{cases} 
\end{equation}
The estimates remain valid in the range $D \ge T^\epsilon$ without restricting the sum to $\gcd(c,d) = 1$.
\end{lemma}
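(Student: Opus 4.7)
The plan is to mimic the argument of Lemma \ref{lem:daniel} while exploiting the arithmetic structure $(x,y) = \vecn\omega + \vecl$. Write $\vecl = \vecp/q$ with $\vecp \in \Z^2$, $q \in \N$, and set $k := cn_1 + dn_2 \in \Z$ for each pair $(c,d)$. The decomposition
\beq
cx + dy + m \;=\; k\omega + (cl_1 + dl_2 + m), \qquad cl_1 + dl_2 + m \;\in\; q^{-1}\Z,
\eeq
separates a ``generic'' regime $k \neq 0$ from the exceptional line $k = 0$, and the rest of the argument is built around analyzing these two cases.

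For $k \neq 0$, clearing denominators and invoking the Diophantine type $\kappa/2$ of $\omega$ yields
\beq
\|cx+dy\|_\Z \;\ge\; q^{-1}\|qk\omega\|_\Z \;\gg_{\omega,q}\; |k|^{-\kappa/2} \;\gg_{\vecn}\; D^{-\kappa/2},
\eeq
where $\|\cdot\|_\Z$ denotes the distance to $\Z$. For $k = 0$, any such pair is of the form $(c,d) = \mu\bigl(-n_2/g, n_1/g\bigr)$ with $g := \gcd(n_1,n_2)$, $\mu\in\Z$, and $cx + dy = \mu\rho$ with $\rho := (n_1p_2 - n_2p_1)/(gq)$. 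The hypothesis $\det(\vecn,\vecl) \notin \Z$ is precisely $q \nmid (n_1p_2 - n_2p_1)$, which forces $\rho$ to have denominator $h \ge 2$ in lowest terms; hence $\|\mu\rho\|_\Z = 0$ when $h \mid \mu$ and $\|\mu\rho\|_\Z \ge 1/h$ otherwise.

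With these two ingredients, the three ranges follow the template of Lemma \ref{lem:daniel} with block size upgraded to $T^{2/\kappa}$. For $D \le T^\eps$ with the $\gcd(c,d)=1$ restriction, the $k=0$ line is cut down to $|\mu|=1$ where $h \nmid \mu$, and $\|cx+dy\|_\Z \ge 1/h$ gives an $O(T^{-A})$ contribution; for $k \neq 0$ the bound $T\|cx+dy\|_\Z \gg T^{1-\eps\kappa/2}\to\infty$ together with rapid decay of $f$ defeats the trivial count $\ll D^2 \le T^{2\eps}$. In the middle range $T^\eps \le D \le T^{2/\kappa}$, the $k=0$ line contributes $\ll D/h \ll D$ from the multiples $\mu \in h\Z$ producing $f(0)$-sized terms; for $k \neq 0$, within each fixed $k$ the residue $cx+dy \bmod 1$ takes only $h$ values (at spacing $1/h$, each realized $\ll D/h$ times), while residues across distinct $k$ are $\gg 1/T$-separated by the $k\neq 0$ bound applied to the difference $k-k'$. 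Hence any arc of length $1/T$ in $\R/\Z$ captures only $\ll D/h$ pairs, and rapid decay of $f$ yields a total of $\ll D/h \cdot \sum_j f(j) \ll D$ for the $k \neq 0$ contribution. For $D \ge T^{2/\kappa}$, a dyadic decomposition into $\ll (D/T^{2/\kappa})^2$ boxes of size $T^{2/\kappa}\times T^{2/\kappa}$ reduces to the middle-range bound per box, yielding $\ll D^2/T^{2/\kappa}$. The $\gcd(c,d)=1$ restriction is invoked only to suppress the $k=0$ line in the small range, so the middle and large bounds persist without it.

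The main obstacle will be the middle-range bookkeeping: one must carefully combine the $h$-fold periodicity of the residues $cx+dy \bmod 1$ along each fixed-$k$ line with the $\gg 1/T$ separation between distinct lines, and the resulting loss of a factor $D$ (versus $1$ in Lemma \ref{lem:daniel}) is exactly the extra contribution of the arithmetic progressions introduced by the shift $\vecl$.
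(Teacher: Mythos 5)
Your proof is correct and rests on the same underlying decomposition as the paper's, though you implement it differently. The paper first applies an $\SL_2(\Z)$ change of variables $\gamma$ chosen so that $\vecn = \gamma\,{}^t(0,n)$ with $n=\gcd(n_1,n_2)$, after which $cx+dy$ takes the separated form $cs_1 + d(n\omega + s_2)$ with $s_1\in\Q\setminus\Z$; it then fixes $c$, bounds the inner sum over $d,m$ uniformly in $c$ by citing the analogous one-variable estimate \cite[Lemma~6.6]{marklof_pair_correlation_2003} (which yields $\ll 1 + D/T^{2/\kappa}$), and sums trivially over the $\ll D$ values of $c$, handling the boundary case $d=0$ of the first range via $s_1\notin\Z$. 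Your version does not change coordinates; instead you stratify directly by the integer $k=cn_1+dn_2$ — which is precisely the variable the $\SL_2(\Z)$ map isolates — and then carry out by hand the residue separation and counting that the cited lemma encapsulates: Diophantine lower bound $\|cx+dy\|_\Z\gg D^{-\kappa/2}$ off the line $k=0$, the $1/h$-periodicity along each line coming from $\rho=\det(\vecn,\vecl)/g\notin\Z$, the $\gg 1/T$ separation of residues across distinct $k$ for $D\le T^{2/\kappa}$, and a dyadic box decomposition into pieces of side $T^{2/\kappa}$ for the third range (where the paper avoids the decomposition by summing over $c$ trivially). Both treatments use the coprimality hypothesis only to kill the $k=0$ line in the small range and use $\det(\vecn,\vecl)\notin\Z$ in exactly the same way; the net effect is that your argument is more self-contained, whereas the paper's is shorter at the cost of an external citation.

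One small point worth making explicit in a write-up: in the third range your per-box application of the middle-range argument uses that the relevant quantities (the number of lattice points on a given $k$-line in the box, and the separation $\gg|k-k'|^{-\kappa/2}$ with $|k-k'|\ll T^{2/\kappa}$) depend only on the box's \emph{side length}, not its position; this is true but should be stated, since the middle-range estimate was originally phrased for the specific box $[D,2D]\times[1,D]$.
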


\begin{proof} 
Write  $\vecn=\begin{pmatrix} n_1 \\ n_2 \end{pmatrix} $. 
There exists a matrix $\gamma \in \SL_2(\Z)$ such that $\begin{pmatrix} n_1 \\ n_2 \end{pmatrix} = \gamma \begin{pmatrix} 0 \\ n \end{pmatrix}$ where $n = \gcd(n_1, n_2)$.
Let $\mathcal{B} = [1, 2] \times (0, 1]$.
We have
\begin{equation} 
\sum_{\substack{(c, d) \in D \mathcal{B} \cap \Z^2 \\ \gcd(c,d) = 1 }} \sum_{m \in \Z} f(T(c x + d y + m)) \\
= \sum_{\substack{(c,d) \in D \tilde{\mathcal{B}} \cap \Z^2 \\ \gcd(c,d) = 1 }} \sum_{m \in \Z} f(T(cs_1 + d (n \omega + s_2) + m))
\end{equation}
where $\tilde{\mathcal{B}} =\mathcal{B}  \gamma^{-1}$ and $\begin{pmatrix} s_1 \\ s_2 \end{pmatrix} = \gamma^{-1} \vecl\in\Q^2$.
Note that $\det(\vecn, \vecl) = \det(\gamma^{-1} (\vecn, \vecl)) = -ns_1$, which implies $s_1 \notin \Z$.

For the second and third ranges we estimate, proceeding as in \cite[Lemma 6.6]{marklof_pair_correlation_2003}, 
\begin{equation}
\sum_{|d|\ll D} \sum_{m \in \Z} f(T(cs_1 + d (n \omega + s_2) + m)) \ll 1 + \frac {D}{T^{2/\kappa}} 
\end{equation} 
which holds uniformly in $c \in \Z$. The sum over $c$ is bounded trivially by a constant times $D$, and we obtain the desired result in the second and third range.
For the first range, if $d=0$ then $c = \pm 1$ and the fact that $s_1 \in \Q \setminus \Z$ yields the desired bound for this term. For the remaining sum over $d \ne 0$, we apply the same argument as in \cite[Lemma 6.6]{marklof_pair_correlation_2003}.
\end{proof}

The proof of Proposition \ref{prop:ilya} can now be adapted to hold subject to 
\begin{enumerate}[{\rm ({B}3)}]
\item  $\vecxi=\vecn \omega + \vecl$ where $\vecn \in \Z^2 \setminus \{\vecnull \}$, $\omega\in\R$ is Diophantine of type $\frac{\kappa}{2}$, $\vecl \in \Q^2$, $\det(\vecn, \vecl) \notin \Z$, and $\beta<1+\frac1{\kappa}$.
\end{enumerate}

Lemma \ref{lem:daniel2} replaces Lemma \ref{lem:daniel} in the proof. 
The estimates for the first range are obtained as before, keeping the restriction $\gcd(c,d)=1$.

For the second range we restrict the domain of integration to $D\ge T^\eps$. In place of \eqref{four seventeen} we have, for any $\rho\in (0,1)$, 
\begin{align}
\notag v\sum_{j\ge 0} 2^{2\beta j}\int_{D\ge T^\eps} D\, dt&\ll v\sum_{j\ge 0}2^{2\beta j}\int_{D\ge T^\eps} D\left(\frac D{T^\eps}\right)^{1-\rho}dt\\
 &\ll v\sum_{j\ge0} 2^{2\beta j}\int_{t\in\R}\frac{2^{-(j+1)(2-\rho)}}{2^{(1-\rho)j\eps}(v(t^2+1))^{\frac{2-\rho}{2}}}dt\\
\notag &\ll v^{\rho/2}\sum_{j\ge 0} 2^{j(2\beta-(2-\rho)-(1-\rho)\eps)}.
\end{align}
The sum over $j$ converges whenever $\beta<1+\frac12 \eps-\frac\rho2(1+\eps)$. Now, for every $\beta<1+\frac1{\kappa}$ we can find $\eps<\frac2\kappa$ and $\rho\in(0,1)$ so that this condition is satisfied. Then the contribution of the second range is $v^{\rho/2}\to0$, as needed. 
In the third range, eq.~\eqref{eq:third} remains unchanged (use again $\delta=\frac 1{\kappa}$).

The remaining sections of the proof of Proposition \ref{prop:ilya} do not require any amendments. Note that (B3) is invariant under $\vecxi\mapsto\vecxi \gamma$ for any $\gamma\in\SL(2,\Z)$. This implies that Propositions \ref{fortdep} and \ref{spherical77} hold subject to (B3), with the same proofs as for (B1), (B2).

\begin{appendix}
\section{Second mixed moment vs.\ two-point correlations}\label{324}

We will show in this section that Corollary \ref{Cor1} implies Corollary \ref{cor2}. The proof is in fact independent of the specific choice of the sequence of $\alpha_j$ as long as they satisfy the conclusion of Corollary \ref{Cor1}. The reverse implication ``Corollary \ref{cor2} $\Rightarrow$ Corollary \ref{Cor1}'' follows from a similar, even simpler, argument.

Assume throught this section that the statement of Corollary \ref{Cor1} holds.



\begin{lemma}\label{lem:nodiagonal}
Let $h\in C(\T)$ and $I_1$ and $I_2$ be bounded intervals in \R. Then 
\begin{multline}\label{eq:nondiag}
\lim_{T\to\infty} \int_{\alpha\in \T}\sum_{\substack{{1\le j_1,j_2\le N} \\ {j_1\ne j_2}\\{m_1,m_2\in\Z}}} \chi_{I_1} (N (\alpha_{j_1}-\alpha+m_1))\,\chi_{I_2}(N (\alpha_{j_2}-\alpha+m_2)) h(\alpha)\,d\alpha \\
=|I_1| |I_2|\int_{\alpha\in\T}h(\alpha)\,d\alpha.
\end{multline}
\end{lemma}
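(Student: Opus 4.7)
The strategy is to obtain the identity in the lemma by relating its left-hand side to the second mixed moment treated in Corollary \ref{Cor1}, and subtracting off the diagonal $j_1=j_2$ contribution, which I handle directly using equidistribution of the directions $\alpha_j$.

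First I would expand
\begin{equation*}
\scrN_{c,T}(I_1,\alpha)\scrN_{c,T}(I_2,\alpha) = \sum_{j_1,j_2,m_1,m_2}\chi_{I_1}(N(\alpha_{j_1}-\alpha+m_1))\chi_{I_2}(N(\alpha_{j_2}-\alpha+m_2))
\end{equation*}
and split the sum into the off-diagonal part $O_T(\alpha)$ (the sum over $j_1\neq j_2$ appearing in the lemma) and the diagonal part $D_T(\alpha)$ with $j_1=j_2$. Provided $N=N(T)$ is large enough that $N>\max(\diam(I_1),\diam(I_2))$, for each fixed $j$ and each fixed $\alpha$ there is at most one $m$ with $N(\alpha_j-\alpha+m)\in I_1\cup I_2$; consequently the double sum over $(m_1,m_2)$ in $D_T(\alpha)$ collapses to the diagonal $m_1=m_2=m$ and
\begin{equation*}
D_T(\alpha) = \sum_{j=1}^{N}\sum_{m\in\Z}\chi_{I_1\cap I_2}(N(\alpha_j-\alpha+m)).
\end{equation*}

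Next I would integrate $D_T$ against $h$. For each $j$, substituting $\beta=N(\alpha_j-\alpha+m)$ gives
\begin{equation*}
\int_\T\sum_{m}\chi_{I_1\cap I_2}(N(\alpha_j-\alpha+m))\,h(\alpha)\,d\alpha
 = \frac{1}{N}\int_{I_1\cap I_2} h\!\left(\alpha_j-\tfrac{\beta}{N}\right)d\beta
 = \frac{|I_1\cap I_2|}{N}\,h(\alpha_j)+o\!\left(\tfrac{1}{N}\right),
\end{equation*}
uniformly in $j$ by uniform continuity of $h$. Summing over $j=1,\ldots,N$ and applying the equidistribution statement \eqref{udi} (which implies $N^{-1}\sum_j h(\alpha_j)\to\int_\T h\,d\alpha$ for any continuous $h$), I obtain
\begin{equation*}
\lim_{T\to\infty}\int_\T D_T(\alpha)\,h(\alpha)\,d\alpha = |I_1\cap I_2|\int_\T h(\alpha)\,d\alpha.
\end{equation*}

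Finally I would apply Corollary \ref{Cor1} with $\lambda$ chosen so that $\lambda(d\alpha)=h(\alpha)d\alpha$: by decomposing $h = (h+\|h\|_\infty)-\|h\|_\infty$ into (constant multiples of) probability densities I may assume $h\geq 0$ and normalize, and linearity reduces the general case. Corollary \ref{Cor1} then yields
\begin{equation*}
\lim_{T\to\infty}\int_\T \scrN_{c,T}(I_1,\alpha)\scrN_{c,T}(I_2,\alpha)\,h(\alpha)\,d\alpha = \bigl(|I_1\cap I_2|+|I_1|\,|I_2|\bigr)\int_\T h(\alpha)\,d\alpha.
\end{equation*}
Subtracting the diagonal contribution produces exactly the right-hand side of \eqref{eq:nondiag}.

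The only substantive obstacle is the diagonal analysis: verifying that the sum over $(m_1,m_2)$ collapses to $m_1=m_2$ for large $N$ and handling the change of variables uniformly in $j$; both are elementary given that $I_1,I_2$ are bounded and $h$ is continuous on the compact torus, so no Diophantine input is required beyond what is already used in Corollary \ref{Cor1}.
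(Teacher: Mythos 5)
Your proof is correct and follows essentially the same route as the paper: apply Corollary \ref{Cor1} with $\lambda(d\alpha)=h(\alpha)\,d\alpha$ (after decomposing $h$ into probability densities) to evaluate the unrestricted sum, compute the diagonal $j_1=j_2$ contribution by collapsing the $(m_1,m_2)$ sum for large $N$ and invoking equidistribution of the $\alpha_j$, then subtract. The only small slip is the stated threshold: the collapse to $m_1=m_2$ requires $N>\diam(I_1\cup I_2)$ (equivalently $\sup_{x\in I_1,\,y\in I_2}|x-y|<N$), not $N>\max(\diam I_1,\diam I_2)$, but this is immaterial since $N\to\infty$ and the intervals are bounded.
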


\begin{proof}
By Corollary \ref{Cor1}, the left hand side of \eqref{eq:nondiag} {\em without the restriction $j_1\neq j_2$} converges to
\beq|I_1\cap I_2|\int_{\alpha\in \T}h(\alpha)d\alpha+|I_1||I_2|\int_{\alpha\in \T}h(\alpha)d\alpha\label{eq:bothwithh}.\eeq
To identify the contribution of the diagonal $j_1= j_2$, note that for $N$ sufficiently large,
\begin{multline}\label{new2}
 \sum_{\substack{1\le j\le N\\ m_1, m_2\in\Z}}\int_{\alpha\in\T} \chi_{I_1} (N (\alpha_j - \alpha +m_1)) \, \chi_{I_2}(N(\alpha_j-\alpha+m_2)) h(\alpha) \,d\alpha \\ = \sum_{\substack{1\le j\le N\\m\in \Z}}\int_{\alpha\in\T} \chi_{I_1\cap I_2} (N (\alpha_j - \alpha +m)) h(\alpha) \,d\alpha .
\end{multline}
Because $h$ is continuous and $|\alpha-\alpha_{j}|\ll_{I_1,I_2} 1/N$, for any given $\eps>0$ there is $N_0$ such that for all $N\ge N_0$ we have $|h(\alpha)-h(\alpha_{j})|< \eps$ for all $\alpha\in\T$, $j\le N$. Therefore
\begin{multline}\label{eq:comparison}\bigg| \sum_{\substack{1\le j\le N\\m\in \Z}}\int_{\alpha\in\T} \chi_{I_1\cap I_2} (N (\alpha_j - \alpha +m)) h(\alpha) d\alpha  
-  \sum_{\substack{1\le j\le N\\m\in \Z}}\int_{\alpha\in\T} \chi_{I_1\cap I_2} (N (\alpha_j - \alpha +m)) h(\alpha_j) d\alpha \bigg|
\\ <  \eps \sum_{\substack{1\le j\le N\\m\in \Z}}\int_{\alpha\in\T} \chi_{I_1\cap I_2} (N (\alpha_j - \alpha +m))  d\alpha 
 =\eps |I_1\cap I_2| .
\end{multline} 
The right hand side of \eqref{new2} is thus, up to lower order terms,
\begin{equation}
\begin{split}
\sum_{\substack{1\le j\le N\\m\in \Z}} h(\alpha_j) \int_{\alpha\in\T} \chi_{I_1\cap I_2} (N (\alpha_j - \alpha +m))  d\alpha & = \frac{| I_1\cap I_2|}{N}\sum_{1\le j\le N} h(\alpha_j) \\
& \to |I_1\cap I_2| \int_{\alpha\in\T}h(\alpha)d\alpha
\end{split}
\end{equation}
as $N\to\infty$, since the $\alpha_j$ are uniformly distributed mod 1. This confirms that the second summand of \eqref{eq:bothwithh} is the off-diagonal contribution appearing in Lemma \ref{lem:nodiagonal}, as needed. 
\end{proof}

\begin{lemma}\label{lem:nodiagonalindex}
Let $g\in C(\T^2)$ and $I_1$ and $I_2$ be bounded intervals in $\R$. Then 
\begin{multline}\label{eq:lolo}
\lim_{T\to\infty}  \sum_{\substack{{1\le j_1,j_2\le N} \\ {j_1\ne j_2}\\{m\in\Z}}} g(\alpha_{j_2},\alpha_{j_2}) \int_{t\in\R} \chi_{I_1} (N (\alpha_{j_1}+t))\chi_{I_2}(N (\alpha_{j_2}+t+m))  \, dt  \\
=|I_1| |I_2|\int_{\alpha\in\T} g(\alpha,\alpha) d\alpha.
\end{multline}
\end{lemma}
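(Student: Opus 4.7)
The plan is to reduce Lemma~\ref{lem:nodiagonalindex} to the already-proven Lemma~\ref{lem:nodiagonal} by observing that the weight $g(\alpha_{j_2},\alpha_{j_2})=:G(\alpha_{j_2})$, a continuous function of the discrete label $\alpha_{j_2}\in\T$, can be swapped for a suitable continuous function of the integration variable $t$, because the support of $\chi_{I_1}(N(\alpha_{j_1}+t))\chi_{I_2}(N(\alpha_{j_2}+t+m))$ pins $t$ to be within $O(1/N)$ of $-\alpha_{j_2}$ modulo $1$.

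First, I would recast Lemma~\ref{lem:nodiagonal} (applied with $h=G\in C(\T)$) in an equivalent ``unfolded'' form. Performing the change of variables $t=m_1-\alpha$ and $m=m_2-m_1$, and using $G$-periodicity to rewrite $G(\alpha)=G(m_1-t)=G(-t)$, the sum over $m_1\in\Z$ together with the integral over $\alpha\in\T$ collapses to a single integral over $t\in\R$. This yields
\begin{equation}\label{plan:unfolded}
\lim_{T\to\infty}\sum_{\substack{1\le j_1,j_2\le N\\ j_1\ne j_2,\, m\in\Z}}\int_\R G(-t)\,\chi_{I_1}(N(\alpha_{j_1}+t))\,\chi_{I_2}(N(\alpha_{j_2}+t+m))\,dt = |I_1|\,|I_2|\int_\T G(\alpha)\,d\alpha.
\end{equation}
The same unfolding applied with $h\equiv 1$ shows that the corresponding unweighted sum converges to $|I_1|\,|I_2|$.

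Next, I would compare \eqref{plan:unfolded} with the target identity and estimate the difference. On the support of $\chi_{I_2}(N(\alpha_{j_2}+t+m))$, we have $t=-\alpha_{j_2}-m+O(1/N)$, so $-t\equiv\alpha_{j_2}\pmod 1$ with error $O(1/N)$ in $\T$. Given $\epsilon>0$, by uniform continuity of $G$ on $\T$ there exists $T_0$ such that $|G(-t)-G(\alpha_{j_2})|<\epsilon$ for all $T\ge T_0$ and all $(t,j_1,j_2,m)$ in the support of the integrand. Consequently, the absolute difference between the expression in Lemma~\ref{lem:nodiagonalindex} and the left-hand side of \eqref{plan:unfolded} is bounded by
\begin{equation}
\epsilon\sum_{j_1\ne j_2,\,m}\int_\R \chi_{I_1}(N(\alpha_{j_1}+t))\,\chi_{I_2}(N(\alpha_{j_2}+t+m))\,dt,
\end{equation}
which converges to $\epsilon\,|I_1|\,|I_2|$ by the unweighted version of \eqref{plan:unfolded}. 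Letting $\epsilon\to 0$ completes the argument.

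The main technical point — and the one I would write out carefully — is the unfolding in the first step: verifying that summing $m_1$ over $\Z$ and integrating $\alpha$ over $\T$ under the substitution $t=m_1-\alpha$ correctly reconstructs $\int_\R\,dt$, and that the $G(\alpha)=G(-t)$ periodicity identity holds. Nothing else is really delicate: once the unfolding is done, the rest is a standard continuity/density argument, with the key quantitative input being that the boundedness of the total mass (from Lemma~\ref{lem:nodiagonal} with $h\equiv 1$) absorbs the $\epsilon$-error uniformly in $T$.
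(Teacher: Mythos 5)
Your proof is correct and is essentially the same as the paper's: both reduce to Lemma~\ref{lem:nodiagonal} via the fold/unfold identity between the $\int_\T d\alpha$ and $\int_\R dt$ parametrizations, and both invoke uniform continuity of $g$ together with the fact that the integrand's support confines the integration variable to an $O(1/N)$-neighborhood of the relevant $\alpha_{j}$, absorbing the $\eps$-error against the total mass $|I_1|\,|I_2|$ from the unweighted case $h\equiv 1$. The only cosmetic difference is direction: the paper folds the target expression to a $\T$-integral and then applies Lemma~\ref{lem:nodiagonal} with $h(\alpha)=g(\alpha,\alpha)$, whereas you unfold Lemma~\ref{lem:nodiagonal} to an $\R$-integral and then match it to the target.
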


\begin{proof}
We write the left hand side of \eqref{eq:lolo} as
\begin{equation}
\int_{\alpha\in \T}\sum_{\substack{{1\le j_1,j_2\le N} \\ {j_1\ne j_2}\\{m_1,m_2\in\Z}}} \chi_{I_1} (N (\alpha_{j_1}-\alpha+m_1))\chi_{I_2}(N (\alpha_{j_2}-\alpha+m_2)) g(\alpha_{j_2},\alpha_{j_2}) d\alpha .
\end{equation}
Note that $|\alpha-\alpha_{j_i}|\ll_{I_i} 1/N$ ($i=1,2$). By the continuity of $g$, for any given $\eps>0$ there is $N_0$ such that for all $N\ge N_0$, we have $|g(\alpha,\alpha)-g(\alpha_{j_1},\alpha_{j_2})|< \eps$ for all $\alpha\in\T$ and $j_1,j_2\le N$. We may therefore replace $g(\alpha_{j_1},\alpha_{j_2})$ by $g(\alpha,\alpha)$ with error at most 
\begin{align}
\bigg| \int_{\alpha\in \T} & \sum_{\substack{{1\le j_1,j_2\le N} \\ {j_1\ne j_2}\\{m_1,m_2\in\Z}}} \chi_{I_1} (N (\alpha_{j_1}-\alpha+m_1))\chi_{I_2}(N (\alpha_{j_2}-\alpha+m_2)) g(\alpha,\alpha)d\alpha  \label{eq:hindex} \\ 
& - \int_{\alpha\in \T}\sum_{\substack{{1\le j_1,j_2\le N} \\ {j_1\ne j_2}\\{m_1,m_2\in\Z}}} \chi_{I_1} (N (\alpha_{j_1}-\alpha+m_1))\chi_{I_2}(N (\alpha_{j_2}-\alpha+m_2)) g(\alpha_{j_2},\alpha_{j_2}) d\alpha\bigg| \\ & < \eps \int_{\alpha\in \T}\sum_{\substack{{1\le j_1,j_2\le N} \\ {j_1\ne j_2}\\{m_1,m_2\in\Z}}} \chi_{I_1} (N (\alpha_{j_1}-\alpha+m_1))\chi_{I_2}(N (\alpha_{j_2}-\alpha+m_2)) d\alpha \\ & \ll \eps |I_1||I_2|,
\end{align} 
where the last inequality follows from Lemma \ref{lem:nodiagonal} with the choice $h=1$. 
We conclude the proof by noting that \eqref{eq:hindex} converges to the desired answer: apply Lemma \ref{lem:nodiagonal} with the choice $h(\alpha)=g(\alpha,\alpha)$.
\end{proof}

Corollary \ref{cor2} now follows from Lemma \ref{lem:nodiagonalindex} by approximating $f\in C_0(\T^2\times\R)$ from above and below by finite linear combinations of functions of the form
\begin{equation}
\widetilde f(x,y,z) = g(x,y) \int_{t\in\R}  \chi_{I_1}(z+t) \chi_{I_2}(t) dt,
\end{equation}
for suitable choices of $g\in C(\T^2)$ and bounded intervals $I_1$, $I_2\subset\R$.

\section{A variant of Siegel's formula}\label{app:Siegel}

Eq.\ \eqref{siegel22} is a special case of the following. (As noted in the case of Siegel's formula, all of the following statements remain valid for $\R^4$ replaced by $\R^{2n}$.)

\begin{prop}\label{siegelvariant} If $F \in L^1(\R^4)$, then
\begin{equation} \label{siegel25}
\adjustlimits\int_{X} \sum_{\vecm_1\ne \vecm_2\in\Z^2} F(\vecm_1 M+\vecxi,\vecm_2 M+\vecxi) \, d\mu(M,\vecxi)  = \int_{\R^4} F(\vecx)\,d\vecx. 
\end{equation} 
\end{prop}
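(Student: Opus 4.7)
The plan is to reduce the statement to the classical Siegel integral formula \eqref{Siegel} on $X_1 = \Gamma \backslash G$, by first unfolding the affine translation variable $\vecxi$ against one of the two lattice indices, and then re-indexing the remaining sum over $\Z^2 \setminus \{\vecnull\}$ so that Siegel's formula applies directly.

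Concretely, I would proceed as follows. First, I would parametrize $X$ as a bundle over $X_1$ with fiber $\Z^2 M \backslash \R^2$ exactly as in the paper's proof of \eqref{Ccdiag}, writing $\vecxi = \bm\zeta M$ with $\bm\zeta$ ranging over $[0,1]^2 \simeq \Z^2 \backslash \R^2$. The left-hand side of \eqref{siegel25} then becomes
\begin{equation*}
\int_{X_1} \int_{[0,1]^2} \sum_{\vecm_1 \ne \vecm_2 \in \Z^2} F\bigl((\vecm_1+\bm\zeta)M,\,(\vecm_2+\bm\zeta)M\bigr)\, d\bm\zeta\, d\mu_1(M).
\end{equation*}

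Second, I would make the substitution $\vecn = \vecm_1 - \vecm_2$, $\vecm = \vecm_2$, so that $\vecn$ ranges over $\Z^2 \setminus \{\vecnull\}$ and $\vecm$ over $\Z^2$. Combining the sum over $\vecm$ with the integral over $\bm\zeta$ in the usual unfolding trick (set $\vecy = \vecm + \bm\zeta$, so $\vecy$ sweeps out all of $\R^2$), the expression becomes
\begin{equation*}
\int_{X_1} \sum_{\vecn \in \Z^2 \setminus \{\vecnull\}} \int_{\R^2} F\bigl((\vecy+\vecn)M,\, \vecy M\bigr)\, d\vecy\, d\mu_1(M).
\end{equation*}
The substitution $\vecx = \vecy M$ (with $|\det M| = 1$) then turns this into
\begin{equation*}
\int_{X_1} \sum_{\vecn \in \Z^2 \setminus \{\vecnull\}} G(\vecn M)\, d\mu_1(M), \qquad G(\vecw) := \int_{\R^2} F(\vecx+\vecw,\vecx)\, d\vecx.
\end{equation*}

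Third, since $F \in L^1(\R^4)$, Fubini gives $G \in L^1(\R^2)$, so Siegel's formula \eqref{Siegel} applies and yields $\int_{\R^2} G(\vecw)\, d\vecw$. A final change of variable $\vecu = \vecx + \vecw$ in this double integral recovers $\int_{\R^4} F(\vecu,\vecx)\, d\vecu\, d\vecx$, which is the right-hand side of \eqref{siegel25}.

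There is really no serious obstacle here; all steps are routine Fubini/change-of-variable manipulations, and the whole argument reduces in one cleanly organized pass to the classical Siegel formula applied to the auxiliary function $G$. The only point where one must be slightly careful is in the initial identification of the Haar measure on $X$ with $d\mu_1(M)\,d\bm\zeta$ under the substitution $\vecxi = \bm\zeta M$; this is already justified in the paper's derivation of \eqref{Ccdiag} and can be invoked verbatim. The extension to $\R^{2n}$ is immediate since Siegel's formula holds in every dimension.
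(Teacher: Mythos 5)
Your proof is correct and follows essentially the same route as the paper: unfold $\vecxi$ against one lattice index via the bundle structure $X\to X_1$ with fiber $\Z^2 M\backslash\R^2$, re-index to a single sum over $\Z^2\setminus\{\vecnull\}$, and apply Siegel's formula \eqref{Siegel} to the auxiliary function $G$. The only thing the paper states explicitly that you gloss over as ``routine Fubini'' is the preliminary reduction to nonnegative $F\in C_0(\R^4)$ (by density and monotone convergence), which is what actually licenses the interchange of the infinite sum with the integrals; this should be flagged, but it is a standard step and does not change the substance of the argument.
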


\begin{proof} 
The density of $C_0(\R^4)$ in $L^1(\R^4)$ and an application of Lebesgue's monotone convergence theorem allow us to assume that $F \in C_0(\R^4)$.
In addition, we assume that $F$ is non-negative.

For every $M \in \SL(2, \R)$ and every $\vecxi \in \R^2$ we set $\bm \zeta = \vecxi M^{-1}$ and thus rewrite
\begin{align} 
&\adjustlimits\int_X  \sum_{\vecm_1\ne \vecm_2\in\Z^2} F(\vecm_1 M+\vecxi,\vecm_2 M+\vecxi) \, d\mu(M,\vecxi) \\ 
&= \int_{X_1}\adjustlimits \int_{\Z^2 \quot \R^2} \sum_{\vecm_1\ne \vecm_2\in\Z^2} F((\vecm_1 + \bm \zeta)M,(\vecm_2 +\bm \zeta)M) \, d\bm \zeta \, d\mu_1(M). \end{align}
Setting $\bm x = \vecm_1 + \bm\zeta$ and $\vecm = \vecm_2 - \vecm_1$, we get that this is equal to
\beq\label{SB1} 
\int_{X_1} \sum_{\vecm \ne \bm 0} \int_{\R^2} F(\bm x M,(\vecm + \bm x)M) \, d \bm x \, d\mu_1(M) 
\eeq
where the non-negativity of $F$ allows the interchange of integration and summation.
A unimodular ($M \in \SL(2,\R)$) change of variables yields that \eqref{SB1} is equal to
\beq \label{SB2} 
\adjustlimits\int_{X_1} \sum_{\vecm \ne \bm 0} \int_{\R^2} F(\bm x,\vecm M + \bm x) \, d \bm x \, d\mu_1(M). 
\eeq
An application of Siegel's formula turns this into
\beq \int_{\R^2} \int_{\R^2} F(\bm x,\bm y + \bm x) \, d\bm y \, d\bm x =
\int_{\R^4} F(\bm x') \, d\bm x' \eeq
as desired.
\end{proof}

\end{appendix}

\bibliographystyle{plain}
\bibliography{bibliography}

\footnotesize
\parindent=0pt

\textsc{Daniel El-Baz, School of Mathematics, University of Bristol, Bristol BS8~1TW, U.K.} \texttt{daniel.el-baz@bristol.ac.uk}
\smallskip

\textsc{Jens Marklof, School of Mathematics, University of Bristol, Bristol BS8~1TW, U.K.} \texttt{j.marklof@bristol.ac.uk}
\smallskip

\textsc{Ilya Vinogradov, School of Mathematics, University of Bristol, Bristol BS8~1TW, U.K.} \texttt{ilya.vinogradov@bristol.ac.uk}

\end{document}